\def\N{\mathbb N}
\def\C{\mathbb C}
\def\R{\mathbb R}
\def\Q{\mathbb Q}
\def\ve{\varepsilon}
\newtheorem{theorem}{Theorem}[section]
\newtheorem{proposition}[theorem]{Proposition}
\newtheorem{lemma}[theorem]{Lemma}
\theoremstyle{definition}
\newtheorem{remark}[theorem]{Remark}
\newtheorem{definition}[theorem]{Definition}
\begin{document}

\title[\tiny Large algebras of singular functions vanishing on prescribed sets]{Large algebras of singular functions vanishing on prescribed sets}

\date{}

\author[Bernal]{Luis Bernal-Gonz\'alez}
\address{Departamento de An\'{a}lisis Matem\'{a}tico,\newline\indent Facultad de Matem\'{a}ticas, \newline\indent Universidad de Sevilla,\newline\indent 41080 Avda.~Reina Mercedes s/n, Spain.}
\email{lbernal@us.es}

\author[Calder\'on]{Mar\'{\i}a del Carmen Calder\'on-Moreno}
\address{Departamento de An\'{a}lisis Matem\'{a}tico,\newline\indent Facultad de Matem\'{a}ticas, \newline\indent Universidad de Sevilla,\newline\indent 41080 Avda.~Reina Mercedes s/n, Spain.}
\email{mccm@us.es}

\keywords{Nowhere analytic function, lineability, algebrability, Pringsheim-singular function, zero set}
\subjclass[2010]{15A03, 26B05}

\thanks{The authors have been partially supported by the Plan
Andaluz de Investigaci\'on de la Junta de Andaluc\'{\i}a FQM-127
Grant P08-FQM-03543 and by MEC Grant MTM2012-34847-C02-01.}

\begin{abstract}
\noindent In this paper, the non-vacuousness of the family of all nowhere analytic infinitely differentiable functions on the real line vanishing on a prescribed set \,$Z$ \,is characterized in terms of \,$Z$. In this case, large algebraic structures are found inside such family. The results obtained complete or extend a number of previous ones by several authors.
\end{abstract}

\maketitle

\section{Introduction}

\quad This paper intends to be a contribution to the study of the linear structure of the family of singular functions
on the real line \,$\R$. A singular function is an infinitely differentiable function that is nowhere analytic.
The search for linear --or, in general, algebraic-- structures
within nonlinear sets has become a trend in the last two decades; see, e.g., the survey \cite{bernalpellegrinoseoane2014} or the forthcoming book
\cite{aronbernalpellegrinoseoane2015}.
Here we focus on those singular functions taking the value \,$0$ \,on a prescribed subset of \,$\R$.

\vskip .15cm

Let us now fix some notation, part of it being standard. The symbol \,$\N$ ($\Q$, resp.) will stand for the set of positive integers (the set of rational numbers, resp.), and \,$\N_0 := \N \cup \{0\}$. We also set \,$\aleph_0 = {\rm card} (\N )$ \,and \,$\mathfrak{c} = {\rm card} (\R )$. We will write \,$A^0$ \,and \,$\partial A$ \,to mean, respectively, the interior and the boundary of a given subset \,$A \subset \R$.
For every function \,$f:\R \to \R$, its zero-set is denoted by \,${\mathcal Z}_f$, that is,
${\mathcal Z}_f := \{x \in \R: \, f(x) = 0\} = f^{-1}(\{0\})$. Note that \,${\mathcal Z}_f$ \,is a closed subset of \,$\R$ \,as soon as \,$f$ \,is continuous.

\vskip .15cm

By \,$\mathcal C$, ${\mathcal C}^\infty$ \,and \,$\mathcal S$ \,we denote, respectively, the class of continuous functions \,$\R \to \R$, the class of infinitely differentiable (or ``smooth'') functions \,$\R \to \R$
\,and the subclass formed by those smooth functions \,$f$ \,which are analytic at {\it no} point of \,$\R$. The members of \,$\mathcal S$ \,are called {\it singular} or {\it nowhere analytic} functions. In turn, an important subclass of \,$\mathcal S$ \,is the set \,$\mathcal{PS}$ \,of {\it Pringsheim-singular functions,} that is,  the collection of all ${\mathcal C}^\infty$-functions \,$f:\R \to \R$ \,for which each point \,$x_0 \in \R$ \,is a {\it Pringsheim singularity,} which in turn means that the radius of convergence \,$R(f,x_0)$ \,of the Taylor series associated to \,$f$ \,at the point \,$x_0$ \,equals \,$0$. A classical explicit example of a singular function is the following one due to Lerch \cite{lerch1888}:
$$
f(x) = \sum_{n=1}^\infty {\cos (a^nx) \over n!}
$$
(where \,$a \ge 3$ \,is an odd positive integer),
while an explicit example of a Pringsheim-singular function (see \cite{bernal2008}) is
$$
g(x) = \sum_{n=1}^\infty b_n^{1-n} \sin (b_n x),
$$
where \,$b_n := 2(2 + c_n + (c_{n-1} + \sum_{j=1}^{n-1} b_j^{n+1-j}))$, $c_n := (n+1)!(n+1)^{n+1}$ \,and the term inside the
inner parentheses in the expression of \,$b_n$ \,is defined as \,$0$ \,if \,$n=1$. Of course, $g$ \,is also in \,${\mathcal S}$.

\vskip .15cm

Now, for every set \,$Z \subset \R$, we denote by \,${\mathcal C}_Z$ (${\mathcal C}^\infty_Z$, $\mathcal{S}_Z$, $\mathcal{PS}_Z$, resp.) the subset of all functions \,$f \in {\mathcal C}$ ($f \in {\mathcal C}^\infty$, $f \in \mathcal{S}$, $f \in \mathcal{PS}$, resp.) for which \,${\mathcal Z}_f \subset Z$.

\vskip .15cm

We endow the vector space \,$\mathcal C$ (${\mathcal C}^\infty$, resp.) with the topology of uniform convergence (of uniform convergence of functions and derivatives, resp.) on any compact subset of \,$\R$. This topology makes $\mathcal C$ (${\mathcal C}^\infty$, resp.) an F-space, that is, a completely metrizable topological vector space. The space \,$\mathcal C$ (${\mathcal C}^\infty$, resp.) \,is, in addition, an algebra, from which each subset \,${\mathcal C}_Z$ (${\mathcal C}^\infty_Z$, resp.) \,is a closed subalgebra.
%Moreover, the family of polynomials is dense in \,${\mathcal C}^\infty$. From the fact that convergence in \,${\mathcal C}^\infty$ \,implies pointwise convergence, one derives that \,${\mathcal C}^\infty_Z$
%\,is also an F-space for every \,$Z \subset \R$.

\vskip .15cm

Section 2 is devoted to recall some lineability notions and briefly
review the main known results about the algebraic structure of special subfamilies of \,$C^\infty$,
including the set of singular functions. In Section 3 we provide concrete examples of such functions,
with special emphasis on those ones vanishing on a prescribed subset of \,$\R$. 
Section 4 is the main one and
contains our new statements on the linear structure
of these families.
For the sake of simplicity, we have formulated our findings for functions defined on the whole real line. Nevertheless, with obvious modifications,
they can be stated for functions defined on a fixed interval of \,$\R$. Analogously, we will make use of (or will recall) a number of results that have originally been proved in an interval (mainly the unit interval \,$[0,1]$), but which can be shown to hold in \,$\R$.

\section{Lineability notions and known results}

\quad The emergent theory of lineability has provided a number of concepts in order to quantify the existence of linear or algebraic structures inside a --not necessarily linear-- set. Specifically, in \cite{arongurariyseoane2005,bernal2010,gurariyquarta2004,aronperezseoane2006,bartoszewiczglab2013} the notions given in the next definition were coined; see also \cite{aronbernalpellegrinoseoane2015,bernalpellegrinoseoane2014}.

\begin{definition}
{\rm
Let \,$A$ \,be a subset of a vector space \,$X$, and \,$\alpha$ \,be a cardinal number. Then \,$A$ \,is said to be:
\begin{enumerate}
\item[$\bullet$] {\it lineable} if there is an infinite dimensional vector space \,$M$ \,such that \,$M \setminus \{0\} \subset A$,
\item[$\bullet$] {\it $\alpha$-lineable} if there exists a vector space \,$M$ with \,dim$(M) = \alpha$ \,and \,$M \setminus \{0\} \subset A$, and
%(hence lineability means $\aleph_0$-lineability,
%where $\aleph_0 = {\rm card}\,(\N )$, the cardinality of the set of positive integers), and
\item[$\bullet$] {\it maximal lineable} in \,$X$ \,if \,$A$ \,is ${\rm dim}\,(X)$-lineable.
\end{enumerate}
If, in addition, $X$ \,is a topological vector space, then \,$A$ is \,said to be:
\begin{enumerate}
\item[$\bullet$] {\it dense-lineable} in \,$X$
whenever there is a dense vector subspace \,$M$ \,of \,$X$ \,satisfying \,$M \setminus \{0\} \subset A$, and
%(hence dense-lineability implies lineability as soon as dim$(X) = \infty$),
\item[$\bullet$] {\it maximal dense-lineable} in \,$X$
whenever there is a dense vector subspace \,$M$ \,of \,$X$ \,satisfying \,$M \setminus \{0\} \subset A$ \,and
dim$\,(M) =$ dim$\,(X)$.
%, and
%\item[$\bullet$] {\it spaceable} in \,$X$ \,if there is a closed infinite dimensional vector
%subspace \,$M$ \,such that \,$M \setminus \{0\} \subset A$.
%(hence spaceability implies lineability).
\end{enumerate}
Now, assume that \,$X$ \,is a topological vector space contained in some (linear) algebra. Then \,$A$ \,is called:
\begin{enumerate}
\item[$\bullet$] {\it algebrable} if there is an algebra \,$M$ \,so
    that \,$M \setminus \{0\} \subset A$ \,and \,$M$ \,is infinitely generated, that is, the cardinality of any system of generators of \,$M$ is infinite.
\item[$\bullet$] {\it densely algebrable} in \,$X$ \,if, in addition, $M$ \,can be taken dense in \,$X$.
\item[$\bullet$] {\it $\alpha$-algebrable} if there is an $\alpha$-generated algebra \,$M$ with \,$M \setminus \{0\} \subset A$.
\item[$\bullet$] {\it densely $\alpha$-algebrable} \,if, in addition, $M$ \,can be taken dense in \,$X$.
\item[$\bullet$] {\it strongly $\alpha$-algebrable} if there exists an $\alpha$-generated {\it free} algebra \,$M$ with \,$M \setminus \{0\} \subset A$ (for $\alpha = \aleph_0$, we simply say {\it strongly algebrable}), and
\item[$\bullet$] {\it densely strongly $\alpha$-algebrable} if, in addition, the free algebra \,$M$ can be taken dense in $X$.
\end{enumerate}
}
\end{definition}

Note that if \,$X$ \,is contained in a commutative algebra then a set \,$B \subset X$ \,is a generating set of some free algebra contained in \,$A$
\,if and only if for any \,$N \in \N$, any nonzero polynomial \,$P$ \,in \,$N$ \,variables without constant term and any distinct \,$f_1, \dots ,f_N \in B$, we have \,$P(f_1, \dots ,f_N) \ne 0$ \,and $P(f_1, \dots ,f_N) \in A$. Observe that strong $\alpha$-algebrability \,$\Longrightarrow$ \,$\alpha$-algebrability \,$\Longrightarrow$ \,$\alpha$-lineability. But none of these implications can be reversed; see \cite[p.~74]{bernalpellegrinoseoane2014}.

\vskip .15cm

In 1954, Morgenstern \cite{morgenstern1954} proved that, in a topological sense, ${\mathcal S}$ \,is a huge part of \,${\mathcal C}^\infty$. To be more precise, he showed that \,${\mathcal S}$ \,is a residual subset of the F-space \,${\mathcal C}^\infty$ (see also \cite{bernal1987,darst1973,ramsamujh1991}). One year later, Salzmann and Zeller \cite{salzmannzeller1955} established that, in fact, the smaller subset \,$\mathcal{PS}$ \,is residual in the same F-space. Despite \,$\mathcal S$ \,is clearly not a linear space, Cater \cite{cater1984} showed in 1984 that it is $\mathfrak{c}$-lineable (that is, maximal lineable in \,${\mathcal C}^\infty$). Finally, improvements in the lineability of these sets were established in 2008 by Bernal \cite{bernal2008} (see also \cite{bernalordonez2014}) and in 2014 by Bartoszewicz {\it et al.}~\cite{bartoszewiczbieniasfilipczakglab2014}, as stated respectively in parts (a)--(b) and (c) of the following theorem.

\begin{theorem} \label{Thm-Bernal-Bartoszewicz}
\begin{enumerate}
\item[\rm (a)] The set \,$\mathcal{PS}$ \,is dense-lineable \,in \,${\mathcal C}^\infty$.
\item[\rm (b)] The set \,$\mathcal{S}$ \,is maximal dense-lineable \,in \,${\mathcal C}^\infty$.
\item[\rm (c)] The set \,$\mathcal{S}$ \,is densely strongly $\mathfrak{c}$-algebrable in \,$\mathcal{C}$.
\end{enumerate}
\end{theorem}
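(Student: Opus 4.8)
The plan is to deduce (a) and (b) from a general dense-lineability principle, and to obtain (c) from an explicit exponential construction. For (a) and (b) I would invoke the criterion of \cite{bernalordonez2014}: if $X$ is a separable metrizable topological vector space and $A,B\subseteq X$ satisfy $A\cap B=\emptyset$, $A+B\subseteq A$, with $A$ being $\alpha$-lineable and $B$ dense-lineable, then $A$ is dense-lineable, and it is maximal dense-lineable once $\alpha=\dim X$. I apply this with $X=\mathcal C^\infty$ (separable, with $\dim\mathcal C^\infty=\mathfrak c$) and with $B=\mathcal P$, the subspace of polynomial functions, which is an infinite-dimensional \emph{dense} subspace of $\mathcal C^\infty$ (approximate uniformly on a compact interval the top derivative in play by a polynomial, then integrate back up), hence dense-lineable through itself. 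For (b) put $A=\mathcal S$: polynomials are entire, so $\mathcal S\cap\mathcal P=\emptyset$; and if $f\in\mathcal S$, $p\in\mathcal P$ and $f+p$ were analytic at some $x_0$, then $f=(f+p)-p$ would be too, so $\mathcal S+\mathcal P\subseteq\mathcal S$; since $\mathcal S$ is $\mathfrak c$-lineable (Cater, \cite{cater1984}) and $\dim\mathcal C^\infty=\mathfrak c$, the criterion yields maximal dense-lineability of $\mathcal S$. For (a) put $A=\mathcal{PS}$: again $\mathcal{PS}\cap\mathcal P=\emptyset$, and $(f+p)^{(k)}(x_0)=f^{(k)}(x_0)$ for $k>\deg p$, so $R(f+p,x_0)=R(f,x_0)$ and $\mathcal{PS}+\mathcal P\subseteq\mathcal{PS}$; it then only remains to show that $\mathcal{PS}$ is $\aleph_0$-lineable.

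For the latter I would start from the explicit $g(x)=\sum_{n\ge1}b_n^{1-n}\sin(b_nx)\in\mathcal{PS}$ of \cite{bernal2008}, split $\N$ into infinitely many pairwise disjoint infinite sets $N_1,N_2,\dots$, and put $g_j(x):=\sum_{n\in N_j}b_n^{1-n}\sin(b_nx)$. The $g_j$ have pairwise disjoint frequency sets, hence are linearly independent; the substantive point is that every nonzero finite combination $\sum_jc_jg_j$ still lies in $\mathcal{PS}$. Here I would reopen the argument of \cite{bernal2008}: it shows that at each $x_0$ and each $k$ the derivative $g^{(2k)}(x_0)$ is dominated by the contribution of a single, suitably chosen, high-frequency term, using only the extremely fast growth and the spacing of $(b_n)_n$ --- properties preserved by passing to an infinite subsequence and by multiplying the amplitudes by the fixed constants $c_j$ (bounded below in modulus over the finitely many active $j$). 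Thus $\sum_jc_jg_j\in\mathcal{PS}$, the linear span of the $g_j$ witnesses $\aleph_0$-lineability of $\mathcal{PS}$, and (a) follows. This robustness check of the original Pringsheim-singularity estimate is the only real obstacle in parts (a)--(b); everything else is formal.

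For (c) I would realize a free algebra of singular functions as exponentials of a single, conveniently chosen, singular function. The key observation is that the very $g$ above has \emph{bounded} derivative, since $g'(x)=\sum_{n\ge1}b_n^{2-n}\cos(b_nx)$ converges absolutely; hence, for small $\varepsilon>0$, the function $\psi(x):=x+\varepsilon g(x)$ is strictly increasing --- so \emph{injective} --- and is nowhere analytic, being the identity plus a singular function. Fix a set $H\subseteq(0,\infty)$ with $|H|=\mathfrak c$ that is linearly independent over $\Q$ (for instance the squares of a transcendence basis of $\R$ over $\Q$), and let $\mathcal A$ be the non-unital algebra generated by $\{e^{r\psi}:r\in H\}$. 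A nonzero element of $\mathcal A$ is $Q(e^{r_1\psi},\dots,e^{r_N\psi})$ for distinct $r_1,\dots,r_N\in H$ and a nonzero polynomial $Q$ without constant term; expanding and collecting exponents, it equals $G\circ\psi$ with $G(t)=\sum_{\mu\in M}c_\mu e^{\mu t}$, where $M$ is a nonempty finite set of \emph{distinct positive} reals --- distinct by the $\Q$-independence of the $r_i$, positive because the monomials of $Q$ are nonconstant. Such a $G$ is a nonzero, nonconstant entire function, and $G$ and $G'$ each have only finitely many real zeros.

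The three required properties of $\mathcal A$ now follow. Freeness: $\psi(\R)$ is a nondegenerate interval, hence not contained in the finite zero set of $G$, so $G\circ\psi\not\equiv0$ --- which is precisely the freeness condition recalled after the definition of algebrability. Membership in $\mathcal S$: if $G\circ\psi$ were analytic at some $x_0$, it would be analytic on an open interval $I\ni x_0$ on which $\psi$ is continuous and nonconstant, so $\psi(I)$ is an interval and there is $x_1\in I$ with $G'(\psi(x_1))\ne0$; near $\psi(x_1)$ the map $G$ has an analytic local inverse $\Psi$, and continuity of $\psi$ gives $\psi=\Psi\circ(G\circ\psi)$ on a neighbourhood of $x_1$, so $\psi$ would be analytic at $x_1$, contradicting $\psi\in\mathcal S$. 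Density in $\mathcal C$: the functions $e^{r\psi}$ separate the points of $\R$ (because $\psi$ is injective) and vanish nowhere, so by the Stone--Weierstrass theorem $\mathcal A$ is uniformly dense on every compact subset of $\R$, that is, dense in $\mathcal C$. Hence $\mathcal A$ is a dense, $\mathfrak c$-generated free subalgebra of $\mathcal C$ with $\mathcal A\setminus\{0\}\subseteq\mathcal S$, establishing (c). The two delicate points are those just used: analyticity of $G\circ\psi$ at one point forces analyticity of $\psi$ somewhere --- which is why one argues at a \emph{regular} point of $G$ inside the interval of analyticity rather than at $x_0$ itself, where $G'(\psi(x_0))$ could vanish --- and separation of points for density requires engineering $\psi$ to be strictly monotone, which is where the boundedness of $g'$ enters.
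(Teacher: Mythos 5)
A preliminary remark: the paper does not prove Theorem~\ref{Thm-Bernal-Bartoszewicz} at all --- it is recalled from the literature, parts (a)--(b) being credited to \cite{bernal2008} (see also \cite{bernalordonez2014}) and part (c) to \cite{bartoszewiczbieniasfilipczakglab2014}. So your proposal can only be compared with those sources and with the techniques the paper itself uses for its new results in Section~4. Measured that way, your architecture is the standard one and is almost entirely correct. For (a)--(b), the reduction via the ``$A+B\subseteq A$'' dense-lineability criterion of \cite{bernalordonez2014}, taking $B=\mathcal P$ the (dense, $\aleph_0$-dimensional) space of polynomials, is exactly the established route; the verifications $\mathcal S\cap\mathcal P=\varnothing$, $\mathcal S+\mathcal P\subseteq\mathcal S$, $\mathcal{PS}+\mathcal P\subseteq\mathcal{PS}$ and the appeal to Cater's $\mathfrak c$-lineability of $\mathcal S$ are all correct, and they settle (b). For (c), your construction coincides in essence with what the paper does later: Lemma~\ref{lemma-exponentials} to get a free $\mathfrak c$-generated algebra of exponential sums composed with a singular function, the local-inverse argument of Theorem~\ref{Theoremsingular} to keep every nonzero member nowhere analytic, and Stone--Weierstrass for density as in Theorem~\ref{Thm-CasiPringsheimsingular}. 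Your injectivity device $\psi(x)=x+\varepsilon g(x)$ with $g'$ bounded is a clean alternative to the paper's choice $f(x)=\int_0^x h(t)\,dt$ with $h\in\mathcal{PS}$ strictly positive; your observations that the exponential sums $G$ and $G'$ have only finitely many real zeros, and that one must pass to a regular point of $G$ inside the interval of analyticity, are exactly the right delicate points, correctly handled. Part (c) as you present it is complete.

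The one genuine gap is the $\aleph_0$-lineability of $\mathcal{PS}$ needed for part (a). You reduce it to the claim that every nonzero finite combination $\sum_j c_j g_j$ of frequency-split pieces of Bernal's example is still Pringsheim-singular, and you assert that the estimates of \cite{bernal2008} survive passage to an infinite subsequence of $(b_n)$. This is plausible --- deleting terms only shrinks the error sum against which the dominant term is compared, and the $c_j$ are bounded above and below over the finitely many active indices --- but it is precisely the substantive content of the claim and is not actually carried out: the amplitudes $b_n^{1-n}$ are tied to the original index $n$ rather than to the position within the subsequence, and the selection of the derivative order $k=k(n)$ witnessing $R=0$ (together with the device handling the possible smallness of $\sin(b_nx_0)$) must be rechecked for the thinned series. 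As written, (a) is a programme rather than a proof. It is worth noting that the paper's own Theorem~\ref{TheoremPringsheimsingular} obtains a stronger conclusion (maximal dense-lineability of $\mathcal{PS}$) by a cleaner device that sidesteps this entirely: take a zero-free $f\in\mathcal{PS}$ and use Remark~\ref{preservesingular} to see that $f\cdot\mathcal E$, with $\mathcal E$ the ($\mathfrak c$-dimensional, dense) space of entire functions, lies in $\mathcal{PS}\cup\{0\}$. Substituting that argument for your subsequence construction would close the gap in (a) immediately.
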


Concerning {\it zeros} of smooth functions and lineability, the next theorem gathers two remarkable, recent results that can be found, respectively, in the papers \cite{conejerojimemezmunozseoane2014} and \cite{conejeromunozmurilloseoane2015} by Conejero {\it et al.}~(for large --topological or linear-- size of families formed by just continuous functions, with many zeros, see for instance \cite{dominguez1986} and \cite{enflogurariyseoane2014}).

\begin{theorem}\label{monthly-belgian}
\begin{enumerate}
\item[\rm (a)] The subset of functions in \,$\mathcal{S}$ \,having infinitely many zeros is $\mathfrak{c}$-algebrable.
\item[\rm (b)] The subset of functions in \,$\mathcal{C}^\infty$ having an uncountable set of zeros is strongly $\mathfrak{c}$-algebrable.
\end{enumerate}
\end{theorem}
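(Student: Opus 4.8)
The plan is to prove both parts through a single construction. Fix a non-constant function $\rho\in{\mathcal C}^\infty$, to be specialized later, and a subset $B\subset\R$ that is linearly independent over $\Q$ and has cardinality $\mathfrak c$ (for instance a Hamel basis of $\R$ over $\Q$). For $r\in B$ put $G_r:=e^{r\rho}-1$, and let $M$ be the algebra generated by $\{G_r:r\in B\}$. I will show that $M$ is a \emph{free} algebra on these $\mathfrak c$ generators and that every nonzero element of $M$ vanishes on $\mathcal Z_\rho$; specializing $\rho$ to a singular function with infinitely many zeros will then yield part (a), in fact with strong $\mathfrak c$-algebrability, and specializing $\rho$ to a smooth function vanishing exactly on $(-\infty,0]$ will yield part (b).

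The first step is a normal form. Given distinct $r_1,\dots,r_N\in B$ and a nonzero polynomial $P$ in $N$ variables without constant term, set $\widetilde P(z):=P(z_1-1,\dots,z_N-1)$, so $\widetilde P\ne0$ while $\widetilde P(1,\dots,1)=P(0,\dots,0)=0$. Then
$$P(G_{r_1},\dots,G_{r_N})=\widetilde P\bigl(e^{r_1\rho},\dots,e^{r_N\rho}\bigr)=H\circ\rho,\qquad H(t):=\widetilde P\bigl(e^{r_1t},\dots,e^{r_Nt}\bigr)=\sum_m a_m\,e^{\langle m,r\rangle t},$$
where $m$ runs over the exponent vectors occurring in $\widetilde P$, the $a_m$ are the corresponding coefficients, and $\langle m,r\rangle:=\sum_j m_jr_j$. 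The role of the $\Q$-linear independence of $r_1,\dots,r_N$ is precisely to make the real frequencies $\langle m,r\rangle$ pairwise distinct, so that $H$ is a genuinely nonzero exponential sum exactly when $\widetilde P\ne0$; note also that $H(0)=0$.

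Next I would invoke the classical fact that a nonzero exponential sum $\sum_{j=1}^n c_je^{\mu_jt}$ with distinct real $\mu_j$ has only finitely many real zeros (induction on $n$ via Rolle's theorem, after dividing by $e^{\mu_1t}$). Since $\rho$ is non-constant, $\rho(\R)$ is a nondegenerate interval, hence infinite, and so is not contained in the finite zero set of $H$; therefore $H\circ\rho\not\equiv0$. This shows that $M$ is free on $\{G_r:r\in B\}$, hence $\mathfrak c$-generated. Moreover $H(0)=0$ gives $\mathcal Z_{H\circ\rho}\supseteq\rho^{-1}(\{0\})=\mathcal Z_\rho$, so every nonzero element of $M$ vanishes on $\mathcal Z_\rho$. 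Taking $\rho$ to vanish exactly on $(-\infty,0]$ settles (b): every nonzero $F\in M$ is smooth and vanishes on an uncountable set. For (a), take $\rho\in\mathcal S$ with $\mathcal Z_\rho$ infinite — such $\rho$ exists, for example $\rho(x)=f(x)\,\sin x$ with $f\in\mathcal S$ (a singular function times a nonzero real-analytic one is again singular, by comparing vanishing orders at the isolated zeros), or any function supplied by Section~3. Each nonzero $F=H\circ\rho\in M$ then has infinitely many zeros, and it remains to check that $F\in\mathcal S$. If $F$ were analytic on some open interval $I$, note first that $H$ is non-constant (being nonzero with $H(0)=0$), so $H'$ is again a nonzero real exponential sum, whose zero set $Z$ is finite; then $\rho^{-1}(Z)\cap I=\bigcup_{c\in Z}\bigl(\rho^{-1}(\{c\})\cap I\bigr)$ is a finite union of closed sets with empty interior, because a singular function is constant on no interval, so some $x_0\in I$ lies outside it. At such an $x_0$ the real-analytic function $H$ has a real-analytic local inverse near $\rho(x_0)$, and continuity of $\rho$ gives $\rho=H^{-1}\circ F$ on a neighbourhood of $x_0$; thus $\rho$ would be analytic at $x_0$, contradicting $\rho\in\mathcal S$. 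Hence $F\in\mathcal S$.

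I expect the last step — the nowhere-analyticity of the composition $H\circ\rho$ — to be the main obstacle: the naive attempt to write $\rho=H^{-1}\circ F$ breaks down exactly at the critical points of $H$, and one needs these to be so sparse that their $\rho$-preimage cannot fill an interval. What makes it work is that $H'$ is itself an exponential sum, hence with only finitely many zeros. The remaining ingredients — the affine normal form, the use of $\Q$-independence to keep $H$ nonzero, and the elementary fact that multiplication by a nonzero real-analytic function preserves singularity — are routine.
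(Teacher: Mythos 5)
Your proof is correct and is essentially the route the paper itself takes: the paper only quotes this theorem from Conejero \emph{et al.}, but its own machinery --- Lemma \ref{lemma-exponentials} with generators $e^{r\rho}-1$ over a $\Q$-independent set of exponents of cardinality $\mathfrak{c}$, combined with the arguments of Theorems \ref{Theoremsmooth} and \ref{Theoremsingular} --- establishes the same (indeed stronger) statements by exactly your construction, including the key step of locally inverting the exponential sum $H$ away from the discrete zero set of $H'$ to contradict the singularity of $\rho$. Your only departures are cosmetic: you rule out $H\circ\rho\equiv 0$ via the finiteness of real zeros of exponential sums instead of the Identity Principle, and you note explicitly that part (a) in fact comes out strongly $\mathfrak{c}$-algebrable, which is precisely the improvement the paper records later as Theorem \ref{Theoremsingular}.
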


We will need the algebrability criterion given by the next lemma, whose content was given by Balzerzak {\it et al.}~in \cite[Proposition 7]{balcerzakbartoszewiczfilipczak2013} (see also \cite[Theorem 1.5]{bartoszewiczbieniasfilipczakglab2014})
for a family ${\mathcal F}$ of functions \,$[0,1] \to \R$ \,and \,$a=0$, and then slightly generalized in \cite{bernal2014}
(for \,$a = 0$ \,as well).
%By \,$\mathcal E$ \,we denote the family of exponential-like functions
%$\R \to \R$, that is, the functions of the form \,$\varphi (x) = \sum_{j=1}^m a_j e^{b_j x}$ \,for some $m \in \N$, some $a_1, \dots ,a_m \in \R \setminus \{0\}$ and some distinct $b_1, \dots ,b_m \in \R \setminus \{0\}$.

%\begin{lemma} \label{lemma-exponentials}
%Let \,$\Omega$ be a nonempty set and ${\mathcal F} \subset \R^\Omega$. Assume that there exists a function \,$f \in {\mathcal F}$
%such that \,$f(\Omega )$ is uncountable and \,$\varphi \circ f \in {\mathcal F}$ \,for every \,$\varphi \in {\mathcal E}$.
%Then \,${\mathcal F}$ is strongly $\mathfrak{c}$-algebrable. More precisely, if $H \subset (0,+\infty )$ is a set with
%\,{\rm card}$(H) = \mathfrak{c}$ \,and linearly independent over the field \,$\Q$, then
%$$
%\{\exp \circ (rf): \, r \in H\}
%$$
%is a free system of generators of an algebra contained in \,${\mathcal F} \cup \{0\}$.
%\end{lemma}

\begin{lemma} \label{lemma-exponentials}
Let \,$\Omega$ be a nonempty set, ${\mathcal F}$ be a collection of functions $ \Omega \to \R$ \,and \,$a \in \R$. Assume that there exists a function \,$f:\Omega \to \R$
\,such that \,$f(\Omega )$ \,has some accumulation point in \,$\R$  and \,$\varphi  \circ f \in {\mathcal F}$ \,for every \,$\varphi$ \,belonging to the algebra generated by the functions \,$E_r (x) := e^{rx} + a$ \,$(r \in \R \setminus \{0\})$.
Then \,${\mathcal F}$ \,is strongly $\mathfrak{c}$-algebrable. More precisely, if \,$H \subset (0,+\infty )$ \,is a set with
\,{\rm card}$(H) = \mathfrak{c}$ \,and linearly independent over the field \,$\Q$, then
$$
\{E_r \circ f: \, r \in H\}
$$
is a free system of generators of an algebra contained in \,${\mathcal F} \cup \{0\}$.
\end{lemma}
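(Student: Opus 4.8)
The plan is to verify the two requirements of being a free system of generators directly: nontriviality (no nonzero polynomial without constant term in finitely many of the $E_r\circ f$ vanishes identically) and membership in $\mathcal F\cup\{0\}$. The membership part is immediate from the hypothesis: any polynomial without constant term in the functions $E_{r_1},\dots,E_{r_N}$ is an element $\varphi$ of the algebra generated by $\{E_r:r\in\R\setminus\{0\}\}$, hence $\varphi\circ f\in\mathcal F$ by assumption; and a polynomial without constant term in $E_{r_1}\circ f,\dots,E_{r_N}\circ f$ is exactly such a $\varphi\circ f$. So the whole content is showing that these compositions generate a \emph{free} algebra, i.e.\ that there are no algebraic relations among them.

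First I would reduce the problem from $\Omega$ to $\R$. Fix distinct $r_1,\dots,r_N\in H$ and a nonzero polynomial $P$ in $N$ variables with no constant term; I must show $P(E_{r_1}\circ f,\dots,E_{r_N}\circ f)\not\equiv 0$ on $\Omega$. Since $f(\Omega)$ has an accumulation point $t_0\in\R$, it suffices to show that the single-variable (real-variable) function
$$
\Phi(t):=P\bigl(E_{r_1}(t),\dots,E_{r_N}(t)\bigr)=P\bigl(e^{r_1 t}+a,\dots,e^{r_N t}+a\bigr)
$$
is not identically zero on any neighbourhood of $t_0$; indeed $\Phi$ is real-analytic on $\R$, so if it vanishes on a set with an accumulation point it vanishes identically, and then we only need $\Phi\not\equiv 0$ on $\R$. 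Thus the problem becomes: if $P$ is a nonzero polynomial without constant term, then $t\mapsto P(e^{r_1t}+a,\dots,e^{r_Nt}+a)$ is not the zero function.

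To see this, expand $\Phi(t)$ in terms of exponentials. Substituting $E_{r_j}(t)=e^{r_jt}+a$ into $P$ and multiplying out, $\Phi(t)$ becomes a finite $\R$-linear combination $\sum_{\lambda} c_\lambda e^{\lambda t}$, where each exponent $\lambda$ is a nonnegative-integer combination $k_1r_1+\dots+k_Nr_N$ with $0\le k_j$ and $k_1+\dots+k_N\le\deg P$. The crucial point is that because $r_1,\dots,r_N$ are linearly independent over $\Q$, distinct multi-indices $(k_1,\dots,k_N)$ give distinct exponents $\lambda$; hence the functions $t\mapsto e^{\lambda t}$ appearing here are pairwise distinct, and distinct exponential functions $e^{\lambda t}$ are linearly independent over $\R$ (e.g.\ via a Vandermonde/Wronskian argument). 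Therefore $\Phi\equiv 0$ forces every coefficient $c_\lambda$ to vanish. Finally I would argue that not all $c_\lambda$ can vanish when $P\neq 0$: looking at the monomial of $P$ of highest total degree whose exponent $k_1r_1+\dots+k_Nr_N$ is, say, maximal among the highest-degree monomials (or more simply, choosing the monomial that maximises $k_1 r_1+\dots+k_N r_N$ over \emph{all} monomials of $P$, which is uniquely attained by $\Q$-independence), the term $e^{\lambda t}$ with that exponent receives a contribution only from that single monomial of $P$ — the $a$-shifts from other monomials produce strictly smaller exponents — so its coefficient equals the (nonzero) coefficient of that monomial in $P$, and in particular is nonzero. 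Since $P$ has no constant term, this extremal $\lambda$ is nonzero, so we are not in the degenerate case $\lambda=0$. Hence $\Phi\not\equiv 0$, completing the proof.

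The main obstacle, and the only step requiring care, is the bookkeeping in the last paragraph: one must choose the right extremal exponent so that it is hit by exactly one monomial of $P$ (so that the $+a$ perturbations cannot cancel its coefficient) and simultaneously guarantee, using the absence of a constant term, that this exponent is not zero. Everything else — the analytic-continuation reduction from $\Omega$ to a neighbourhood of $t_0$ and then to all of $\R$, and the linear independence of distinct real exponentials — is standard.
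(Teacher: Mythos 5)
Your proposal is correct and follows essentially the same route as the paper: expand $P(e^{r_1t}+a,\dots,e^{r_Nt}+a)$ into exponentials, use the $\Q$-independence of the $r_j$ (together with their positivity and the absence of a constant term in $P$) to isolate a dominant exponent whose coefficient cannot cancel, and then transfer freeness through $f$ via real-analyticity of $Q$ and the accumulation point of $f(\Omega)$. The only cosmetic difference is that you conclude $Q\not\equiv 0$ from linear independence of distinct exponentials, whereas the paper notes $|Q(x)|\to+\infty$ as $x\to+\infty$; both rest on the same dominant-term bookkeeping.
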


\begin{proof}
The proof follows the lines of the corresponding proofs in \cite[Proposition 7]{balcerzakbartoszewiczfilipczak2013} and \cite{bernal2014}, but we provide it for the convenience of the reader. Each element of the algebra \,$\mathcal A$ \,generated by the \,$E_r$'s has the shape \,$Q = P(E_{r_1}, \dots , E_{r_p})$ \,for some \,$p \in \N$, some different \,$r_1, \dots ,r_p \in H$ \,and some real polynomial \,$P$ \,in \,$p$ \,variables without constant term and \,$P \not\equiv 0$. Then \,$Q(x)$ \,is a finite sum of terms of the form \,$A (e^{r_1x} + a)^{m_1} \cdots (e^{r_px} + a)^{m_p}$, where $A \ne 0$ \,and \,$m_1, \dots ,m_p \in \N_0$ \,but \,$m_1 + \cdots + m_p > 0$. Each of these terms possesses, in turn, the shape
$$
A \, e^{(m_1r_1 + \cdots + m_pr_p)x} + S(x),
$$
where \,$S$ \,is a finite linear combination of functions of the form \,$e^{bx}$ \,such that \,$b < m_1r_1 + \cdots + m_pr_p$. The $\Q$-independence of the \,$r_i$'s \,implies that all combinations \,$m = m_1r_1 + \cdots + m_pr_p$ \,appearing in the expansion of \,$Q$ \,are mutually different,
so that no cancellation is possible when summing up all terms \,$A e^{mx}$.
By considering the maximum of these \,$m$'s, we obtain \,$|Q(x)| \to +\infty$ \,as \,$x \to +\infty$, hence \,$Q \not\equiv 0$. This shows that \,$\mathcal A$ \,is a free algebra, and \,$\mathcal F$ \,is strongly $\mathfrak{c}$-algebrable.

Now, let \,$\mathcal B$ \,be the algebra generated by the functions
\,$E_r \circ f$ \,$(r \in H )$. Each member of \,$\mathcal B$ \,has the form
$$T = P(E_{r_1} \circ f, \dots , E_{r_p} \circ f),$$
with \,$P$ \,a nonzero polynomial and the \,$E_{r_i}$'s \,as above. Therefore \,$T = Q \circ f$, with \,$Q$ \,as above. If \,$T \equiv 0$, then \,$Q(x) = 0$ \,for all \,$x \in f(\Omega )$. Since this set has some accumulation point in \,$\R$ \,and \,$Q$ \,is analytic in \,$\R$, the Identity Principle tells us that \,$Q \equiv 0$. But this implies \,$P \equiv 0$, so showing the freedom of the system \,$\{E_r \circ f: \, r \in H\}$.
\end{proof}

\section{Set of zeros of special smooth functions}

\quad To start with, in this section we are going to establish a number of propositions telling us under what conditions a smooth function exists having a {\it prescribed} set of zeros. The first of them is well known, but it has been incorporated for the sake of convenience. Recall that, inside \,$\mathcal{C}^\infty$\hskip -2pt , ``singular function'' and ``entire function'' are diametrally opposite concepts: a function \,$f: \R \to \R$ \,is entire if there is \,$x_0 \in \R$ \,such that \,$\sum_{n =0}^\infty {f^{(n)} (x_0) \over n!} \, (x - x_0)^n = f(x)$ \,on all of \,$\R$ \,(equivalently, if for all \,$x_0 \in \R$ \,the same equality holds in \,$\R$).

\begin{proposition}\label{Propexistsentire}
Let \,$Z$ \,be a subset of \,$\R$. The following properties are equivalent:
\begin{enumerate}
\item[\rm (a)] Either \,$Z = \R$ \,or \,$Z$ \,has no accumulation point in \,$\R$.
\item[\rm (b)] There exists an entire function \,$f:\R \to \R$ \,such that \,${\mathcal Z}_f = Z$.
\end{enumerate}
\end{proposition}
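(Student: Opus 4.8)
The plan is to prove the two implications separately, with essentially all the work going into \,(a)\,$\Rightarrow$\,(b); the converse is just the Identity Principle. For \,(b)\,$\Rightarrow$\,(a), I would first recall that, in the sense adopted here, if \,$f:\R\to\R$ \,is entire then its Taylor series at some \,$x_0$ \,converges to \,$f$ \,on all of \,$\R$, hence has infinite radius of convergence and so defines a holomorphic function \,$\widetilde f$ \,on all of \,$\C$ \,with \,$\widetilde f|_\R = f$. If \,$f\not\equiv 0$, then \,$\widetilde f\not\equiv 0$, and the Identity Principle forces the zeros of \,$\widetilde f$ — in particular those of \,$f$, that is, the points of \,$Z={\mathcal Z}_f$ — to be isolated in \,$\C$; thus \,$Z$ \,has no accumulation point in \,$\R$. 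If instead \,$f\equiv 0$, then \,$Z=\R$. Either way, (a) holds.

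For \,(a)\,$\Rightarrow$\,(b), I would split into cases. If \,$Z=\R$, take \,$f\equiv 0$. Otherwise \,$Z$ \,has no accumulation point in \,$\R$, so \,$Z$ \,is closed and discrete, hence at most countable, and (in the infinite case) its elements may be listed as a sequence \,$(a_k)$ \,of distinct reals with \,$|a_k|\to\infty$, since any bounded interval meets \,$Z$ \,in a finite set. The key step is then an appeal to the Weierstrass factorization theorem: there is an entire function \,$F$ \,on \,$\C$ \,whose zero set is exactly \,$\{a_k\}$ (all zeros simple), given by a Weierstrass product \,$F(z)=z^{\delta}\prod_k E_{p_k}(z/a_k)$ \,taken over the nonzero \,$a_k$, where \,$\delta\in\{0,1\}$ \,records whether \,$0\in Z$ \,and the convergence exponents \,$p_k$ \,are chosen in the standard way (for instance \,$p_k=k$ always works); when \,$Z=\emptyset$ \,this product degenerates to \,$1$, and when \,$Z$ \,is finite it is simply the polynomial \,$\prod_k (z-a_k)$. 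Finally I would note that, since every node \,$a_k$ \,is real and each Weierstrass elementary factor \,$E_p(w)=(1-w)\exp(w+w^2/2+\cdots+w^p/p)$ \,is real for real \,$w$, the function \,$F$ \,is real-valued on \,$\R$; hence \,$f:=F|_\R$ \,is an entire function \,$\R\to\R$ \,with \,${\mathcal Z}_f=Z$.

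There is no genuine obstacle here: the content of \,(a)\,$\Rightarrow$\,(b) is entirely carried by Weierstrass' theorem, and the only point that needs a word of care is the real-valuedness of the Weierstrass product on \,$\R$ — which is automatic precisely because all the prescribed zeros are real. For \,(b)\,$\Rightarrow$\,(a) the only subtlety is remembering that ``entire'' in the sense of this paper means the Taylor series converges to \,$f$ \,globally on \,$\R$, so that the radius of convergence is \,$+\infty$ \,and the extension to \,$\C$ \,is legitimate; after that it is just the Identity Principle.
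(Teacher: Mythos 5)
Your proposal is correct and follows essentially the same route as the paper: the Identity Principle for (b)\,$\Rightarrow$\,(a), and for (a)\,$\Rightarrow$\,(b) the trivial cases plus a Weierstrass product over \,$\C$ \,restricted to \,$\R$. Your explicit remark that the product is real-valued on \,$\R$ \,because all the prescribed zeros are real is a point the paper leaves implicit, but it is not a different argument.
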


\begin{proof}
Since an entire function \,$f$ \,is in particular analytic in \,$\R$, the Identity Principle tells us that either \,$f \equiv 0$ \,or \,${\mathcal Z}_f$ \,has no accumulation point in \,$\R$; this shows that (b) implies (a). As for the reverse implication, assume that \,$Z \subset \R$ \,is as in (a). If \,$Z = \R$ ($Z = \varnothing$, $Z = \{a_1, \dots ,a_N\}$ is finite, resp.) then simply take \,$f \equiv 0$ ($f \equiv 1$, $f(x) = \prod_{j=1}^N (x-a_j)$, resp.). Otherwise, $Z$ \,is a countable infinite set, say \,$\{a_1, \dots , a_n, \dots \}$, with \,$|a_n| \to +\infty$. Passing to the complex plane \,$\C$, the Weierstrass factorization theorem (see for instance \cite{rudin1987}) guarantees the existence of an entire function \,$g : \C \to \C$ \,with \,${\mathcal Z}_g = Z$, namely
$$
f(z) = \prod_{n=1}^\infty \left( 1 - {z \over a_n} \right) \, \exp \left( \sum_{j=1}^n {1 \over j} \, \big( {z  \over a_n} \big)^j \right) ,
$$
with the $nth$-factor replaced by $z$ if $a_n=0$. Finally, choose \,$f := g|_{\R}$.
\end{proof}

The next family of functions will be useful later. For reals \,$a,b$ \,with \,$a < b$, we define \,$\Phi_{a,b} : \R \to \R$, $\Phi_{-\infty ,b}$ \,and \,$\Phi_{a,+\infty}$ \,by
$$
\Phi_{a,b}(x) =  \left\{
\begin{array}{ll}
                 e^{- {1 \over (x-a)^2} - {1 \over (x-b)^2}}  & \mbox{if } x \in (a,b)  \\
                 0 & \mbox{otherwise,}
\end{array} \right.  \eqno (1)
$$
$$
\Phi_{-\infty,b}(x) =  \left\{
\begin{array}{ll}
                 e^{- {1 \over (x-b)^2}}  & \mbox{if } x < b  \\
                 0 & \mbox{otherwise}
\end{array} \right. \hbox{\,and\,\,\,\,}
\Phi_{a,+\infty}(x) =  \left\{
\begin{array}{ll}
                 e^{- {1 \over (x-a)^2}}  & \mbox{if } x > a  \\
                 0 & \mbox{otherwise.}
\end{array} \right.  \eqno (2)
$$

\begin{remark}\label{remark1}
{\rm The following facts are well known (or easy to check):
\begin{itemize}
\item Each $\Phi_{a,b}$ ($\Phi_{-\infty ,b}$, $\Phi_{a,+\infty}$) \,is in \,$\mathcal{C}^\infty$\,and is analytic in \,$\R$ \,except at \,$a$ \,and \,$b$ (except at \,$b$, except at \,$a$, resp.).
\item $\Phi_{a,b}^{(k)} (a) = 0 = \Phi_{a,b}^{(k)} (b) = \Phi_{-\infty ,b}^{(k)} (b) = \Phi_{a,+\infty}^{(k)} (a)$ \,for every \,$k \in \N_0$.
\item For all \,$x \in (a,b)$ (all \,$x \in (-\infty ,b)$, all \,$x \in (a,\infty )$, resp.), we have, respectively
$$
\Phi_{a,b}^{(k)} (x) = P_k(x-a,x-b) (x-a)^{-3k} (x-b)^{-3k} e^{- (x-a)^{-2} - (x-b)^{-2}},
$$
$$
\Phi_{-\infty,b}^{(k)} (x) = P_k(x-b) (x-b)^{-3k} e^{- (x-b)^{-2}},
$$
$$
\Phi_{a,+\infty}^{(k)} (x) = P_k(x-a) (x-a)^{-3k} e^{- (x-a)^{-2}},
$$
where \,$P_k(\cdot , \cdot )$ ($P_k(\cdot )$, resp.) is a symmetric polynomial in two variables (is a polynomial in one variable, resp.) depending only on \,$k$.
\end{itemize}}
\end{remark}

\begin{proposition}\label{Propexistssmooth}
Let \,$Z$ \,be a subset of \,$\R$. The following properties are equivalent:
\begin{enumerate}
\item[\rm (a)] $Z$ \,is closed.
\item[\rm (b)] There exists a function \,$f \in {\mathcal C}^\infty$ \,such that \,${\mathcal Z}_f = Z$.
\end{enumerate}
\end{proposition}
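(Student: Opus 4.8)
The implication (b) $\Rightarrow$ (a) is immediate: if $f \in \mathcal{C}^\infty$, then $f$ is continuous, and the zero-set of a continuous function is closed, so $Z = {\mathcal Z}_f$ is closed. The substance of the proposition is the converse, (a) $\Rightarrow$ (b): given a closed set $Z \subset \R$, construct a smooth $f$ with ${\mathcal Z}_f = Z$. The classical idea is to exploit the fact that the complement $\R \setminus Z$ is open, hence a countable (possibly finite, possibly empty) disjoint union of open intervals, and to build $f$ as a sum of the bump functions $\Phi_{a,b}$, $\Phi_{-\infty,b}$, $\Phi_{a,+\infty}$ introduced above, one for each of these component intervals, suitably scaled so the series converges in $\mathcal{C}^\infty$.

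The plan is as follows. First, dispose of the trivial case $Z = \R$ by taking $f \equiv 0$. Otherwise write $\R \setminus Z = \bigcup_{n} I_n$ as an at most countable union of pairwise disjoint open intervals $I_n$ (the connected components of the open set $\R \setminus Z$); each $I_n$ is of the form $(a_n, b_n)$, $(-\infty, b_n)$, or $(a_n, +\infty)$, and at most two of them are unbounded. To each $I_n$ associate the corresponding function $\psi_n \in \{\Phi_{a_n,b_n}, \Phi_{-\infty,b_n}, \Phi_{a_n,+\infty}\}$; by Remark~\ref{remark1}, $\psi_n \in \mathcal{C}^\infty$, $\psi_n > 0$ precisely on $I_n$, and $\psi_n \equiv 0$ on $\R \setminus I_n$ together with all derivatives vanishing at the endpoints. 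Now set
$$
f := \sum_{n} \varepsilon_n \, \psi_n ,
$$
where the positive constants $\varepsilon_n$ are chosen small enough to force $\mathcal{C}^\infty$-convergence; since a bounded interval $I_n$ may be ``far out'' it is convenient first to enumerate so that, say, $I_n \subset (-n, n)$ is not required, but instead one controls derivatives on each fixed compact $[-N,N]$. Concretely, using the explicit formulas in Remark~\ref{remark1} one sees that for each $k$, $\sup_{x \in \R} |\psi_n^{(k)}(x)| \le C_{n,k} < \infty$; choosing $\varepsilon_n := 2^{-n} / (1 + \max_{0 \le k \le n} C_{n,k})$ guarantees that for every $k$ the series $\sum_n \varepsilon_n \psi_n^{(k)}$ converges uniformly on $\R$. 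Hence $f \in \mathcal{C}^\infty$ and $f^{(k)} = \sum_n \varepsilon_n \psi_n^{(k)}$ for all $k$.

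Finally verify ${\mathcal Z}_f = Z$. If $x \in Z$, then $x \notin I_n$ for every $n$, so $\psi_n(x) = 0$ for all $n$ and $f(x) = 0$; thus $Z \subset {\mathcal Z}_f$. Conversely, if $x \notin Z$, then $x \in I_{n_0}$ for exactly one $n_0$; since the $I_n$ are disjoint, $\psi_n(x) = 0$ for $n \ne n_0$ while $\psi_{n_0}(x) > 0$, so $f(x) = \varepsilon_{n_0} \psi_{n_0}(x) > 0$; thus ${\mathcal Z}_f \subset Z$. Combining the inclusions gives ${\mathcal Z}_f = Z$, completing the proof.

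The only delicate point — and the place where a little care is needed rather than genuine difficulty — is the choice of the $\varepsilon_n$ to secure convergence in the topology of $\mathcal{C}^\infty$ (uniform convergence of functions and all derivatives on compacts). The explicit derivative formulas in Remark~\ref{remark1} make the bounds $C_{n,k}$ completely effective — each $\psi_n^{(k)}$ is a product of a polynomial, negative powers of $(x - a_n)$ and/or $(x - b_n)$, and a fast-decaying exponential factor that kills all those singularities at the endpoints — so that the supremum over $\R$ is finite for every $n, k$; one just picks $\varepsilon_n$ to beat these along the diagonal. Everything else is a routine partition-of-the-open-complement argument together with the disjointness of the pieces. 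Note also that, should one prefer a nonnegative $f$ with a specified sign pattern, the same construction works; here we only need ${\mathcal Z}_f = Z$.
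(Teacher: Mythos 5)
Your proof is correct, and its skeleton (zero sets of continuous functions are closed; decompose $\R \setminus Z$ into its component intervals; attach one of the bump functions $\Phi_{a,b}$, $\Phi_{-\infty,b}$, $\Phi_{a,+\infty}$ to each component; sum) coincides with the paper's. Where you genuinely diverge is in how smoothness of the sum is established. The paper sums with unit coefficients, $f=\sum_n \Phi_{a_n,b_n}$, notes that smoothness is automatic off $\partial Z$ because the sum is locally a single term, and then proves $f^{(k)}_{\pm}(c)=0$ at each $c\in\partial Z$ by directly estimating difference quotients: near $c$ the quotient is either $0$ or dominated by $(x-a_m)^{-1}e^{-1/(x-a_m)^2}$ uniformly in $m$, and an induction on $k$ using $\lim_{t\to 0}t^{-N}e^{-1/t^2}=0$ finishes the job. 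You instead rescale, $f=\sum_n\varepsilon_n\psi_n$ with $\varepsilon_n$ chosen along the diagonal so that $\sum_n\varepsilon_n\,\sup_{\R}|\psi_n^{(k)}|<\infty$ for every $k$, and invoke term-by-term differentiation. Your route is more routine and outsources the delicate pointwise analysis at accumulation points of the components to the Weierstrass $M$-test plus the standard differentiation theorem; the only obligations it adds are checking $\sup_{\R}|\psi_n^{(k)}|<\infty$ (clear for bounded components by compact support, and true for the at most two unbounded ones since their derivatives have finite limits at infinity) and observing that the rescaling is harmless for the zero set because $\varepsilon_n>0$. Your $f$ also retains the extra properties recorded in Remark~\ref{remark2} (analyticity on $(\R\setminus Z)\cup Z^0$ and vanishing of all derivatives on $Z$), which the paper needs later in the proof of Proposition~\ref{Propexistssingular}, so the substitution would be seamless. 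One small omission: after discarding $Z=\R$ you assume every component is of one of the three listed forms, which silently excludes $Z=\varnothing$ (whose unique component is $\R$ itself); that case needs the separate trivial choice $f\equiv 1$, exactly as in the paper.
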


\begin{proof}
Trivially, (b) implies (a), because \,${\mathcal Z}_f = f^{-1}(\{0\})$ \,and every smooth function is continuous. In order to prove that (a) implies (b), fix a nonempty proper closed subset \,$Z \subset \R$ (the case \,$Z \in \{\varnothing , \R \}$ \,is trivial). Then \,$\R \setminus Z$ \,is a nonempty proper open subset of \,$\R$, so that there is \,$M \subset \N$ (with \,$M = \{1, \dots ,N\}$ \,or \,$M = \N$) \,such that \,$\R \setminus Z$ \,can be written as a countable disjoint union
$$
\R \setminus Z = \bigcup_{n \in M} (a_n,b_n),
$$
where \,$-\infty \le a_n < b_n \le +\infty$ (of course, at most one \,$a_n$ \,is \,$-\infty$ \,and at most one \,$b_n$ \,is \,$+\infty$). Define the function
\,$\R \to \R$ \,by
$$
f = \sum_{n \in M} \Phi_{a_n,b_n},
$$
with the \,$\Phi_{a_n,b_n}$'s \,given by (1) and (2).
The following is evident: $f$ \,is well defined, ${\mathcal Z}_f = Z$, $f$ \,has finite derivatives of all orders at each \,$x \in (\R \setminus Z) \cup Z^0$
\,and \,$f^{(k)}(x) = 0$ \,for all \,$x \in Z^0$ \,and all \,$k \in \N$.

\vskip .15cm

It remains only to prove that \,$f$ \,is infinitely derivable at each \,$c \in \partial Z$. In turn, it is enough to show that \,$f_+^{(k)}(c) = 0 = f_-^{(k)} (c)$ $(k \in \N)$ \,for such a point \,$c$. Let us show that \,$f_+^{(k)}(c) = 0$ \,for all \,$k$, the proof of \,$f_-^{(k)}(c) = 0$ \,being analogous. Consider first \,$k=1$. Three cases are possible: either \,$c = a_m$ \,for some \,$m$, or \,$[c,c+\delta ) \subset Z$ \,for some \,$\delta > 0$, or for any \,$\delta > 0$ \,the intersection \,$Z \cap (c,c + \delta )$ \,is nonempty and infinitely many \,$(a_n,b_n)$ \,are contained in \,$(c,c + \delta )$. In the first case, $f_+'(c) = 0$ \,by Remark \ref{remark1}, while the equality is plain in the second case. In the third case, fix \,$\ve > 0$. Since $\lim_{t \to 0} t^{-1} e^{-1/t^2} = 0$, we can choose \,$\delta > 0$ \,so that \,$t^{-1} e^{-1/t^2} < \ve$ \,if \,$0 < t < \delta$. Given \,$x \in (c,c + \delta )$,
one has that either \,$x \in Z$ \,or there is a unique \,$m = m(x) \in \N$ \,such that \,$c < a_m < x < b_m$. Then the quotient \,${f(x) - f(c) \over x-c}$ \,is either \,$0$ \,or
$$(x-c)^{-1} \Phi_{a_m,b_m} (x) = {x-a_m \over x-c}(x-a_m)^{-1} e^{- {1 \over (x-a_m)^2}} e^{- {1 \over (x-b_m)^2}},$$
and this expression is (positive and) less than
\,$(x-a_m)^{-1} e^{- {1 \over (x-a_m)^2}}$. Since \,$0 < x - a_m < \delta$, the mentioned expression is \,$< \ve$.
We conclude that \,$f_+'(c) = \lim_{x \to c^+}{f(x) - f(c) \over x-c} = 0$. Finally, an induction procedure (on the order \,$k$) using the formulas for derivatives given in Remark \ref{remark1} together with the property \,$\lim_{t \to 0} t^{-N} e^{-1/t^2} = 0$ $(N \in \N)$ \,completes the proof.
\end{proof}

\begin{remark} \label{remark2}
{\rm Note that the function constructed in the last proof satisfies, in addition, that \,$f$ \,is analytic on \,$(\R \setminus Z) \cup Z^0$ \,and \,$f^{(n)}|_Z = 0$ \,for all \,$k \in \N_0$.}
\end{remark}

\begin{proposition}\label{Propexistssingular}
Let \,$Z$ \,be a subset of \,$\R$. The following properties are equivalent:
\begin{enumerate}
\item[\rm (a)] $Z$ \,is closed and \,$Z^0 = \varnothing$.
\item[\rm (b)] There exists a function \,$f \in {\mathcal S}$ \,such that \,${\mathcal Z}_f = Z$.
\end{enumerate}
\end{proposition}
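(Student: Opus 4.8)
The plan is to establish the two implications separately, with the whole substance residing in (a) $\Rightarrow$ (b). The implication (b) $\Rightarrow$ (a) is immediate: if \,$f \in {\mathcal S}$ \,satisfies \,${\mathcal Z}_f = Z$, then \,$Z = f^{-1}(\{0\})$ \,is closed by continuity of \,$f$; and if \,$Z^0$ \,contained an open interval \,$I$, then \,$f$ \,would vanish identically on \,$I$ \,and hence be real-analytic at every point of \,$I$, contradicting \,$f \in {\mathcal S}$. Thus \,$Z^0 = \varnothing$.

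For (a) $\Rightarrow$ (b), the idea is to take a smooth function with zero-set \,$Z$ \,and multiply it by a nowhere analytic factor that stays bounded away from \,$0$, so that neither the zero-set nor the smoothness is disturbed but analyticity is destroyed everywhere. Concretely, by Proposition \ref{Propexistssmooth} together with Remark \ref{remark2} (taking \,$g \equiv 1$ \,in the trivial case \,$Z = \varnothing$) there is \,$g \in {\mathcal C}^\infty$ \,with \,${\mathcal Z}_g = Z$, such that \,$g$ \,is analytic on \,$(\R \setminus Z) \cup Z^0 = \R \setminus Z$ (the equality holding because \,$Z^0 = \varnothing$) and \,$g^{(n)}|_Z = 0$ \,for every \,$n \in \N_0$. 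Let \,$S$ \,be Lerch's singular function recalled in the Introduction; it satisfies \,$|S(x)| \le \sum_{n \ge 1} 1/n! = e-1$ \,for every \,$x$, and it lies in \,${\mathcal C}^\infty$ \,since term-by-term differentiation converges uniformly, its \,$k$th derivative being dominated termwise by \,$\sum_{n \ge 1} a^{kn}/n! < \infty$. Putting \,$\ve := 1/(2(e-1))$ \,and
$$
f := g \cdot (1 + \ve S),
$$
we obtain \,$f \in {\mathcal C}^\infty$ \,as a product of smooth functions, and since \,$1/2 \le 1 + \ve S \le 3/2$ \,everywhere, \,${\mathcal Z}_f = {\mathcal Z}_g = Z$.

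It then remains to check that \,$f$ \,is analytic at no point. If \,$x_0 \in \R \setminus Z$, then \,$g$ \,is analytic and non-zero near \,$x_0$, so \,$1/g$ \,is analytic there; were \,$f$ \,analytic at \,$x_0$, then \,$1 + \ve S = f/g$ \,would be analytic at \,$x_0$, hence so would \,$S$ (as \,$\ve \ne 0$), contradicting \,$S \in {\mathcal S}$. If \,$x_0 \in Z$, the Leibniz rule and \,$g^{(j)}(x_0) = 0$ \,$(j \in \N_0)$ \,give \,$f^{(n)}(x_0) = 0$ \,for all \,$n \in \N_0$; were \,$f$ \,analytic at \,$x_0$, its Taylor series there would be identically \,$0$, forcing \,$f \equiv 0$ \,on a neighbourhood of \,$x_0$, i.e.\ that neighbourhood would lie in \,$Z$, contradicting \,$Z^0 = \varnothing$. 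Hence \,$f \in {\mathcal S}$ \,with \,${\mathcal Z}_f = Z$.

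The one delicate point — and the only place where \,$Z^0 = \varnothing$ \,is genuinely used in the construction — is the case \,$x_0 \in Z$; the observation that makes it painless is the general fact just exploited: a smooth function all of whose derivatives vanish on a closed set with empty interior must be non-analytic at every point of that set. This lets one dispose of \,$\partial Z = Z$ \,without re-running the difference-quotient estimates from the proof of Proposition \ref{Propexistssmooth}. Alternatively, one may bypass Proposition \ref{Propexistssmooth} entirely and define \,$f := \sum_n \Phi_{a_n,b_n} \cdot (1 + \ve S)$ \,directly over the component intervals \,$(a_n,b_n)$ \,of \,$\R \setminus Z$, repeating that proof's smoothness argument with the harmless bounded extra factor \,$1 + \ve S$; reusing \,$g$ \,is simply more economical.
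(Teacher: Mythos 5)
Your proposal is correct and follows essentially the same route as the paper: both multiply a smooth function \,$g$ \,with \,${\mathcal Z}_g = Z$ \,(from Proposition \ref{Propexistssmooth} and Remark \ref{remark2}) by a bounded, nowhere-vanishing singular function built from Lerch's example, and both use the analyticity of \,$1/g$ \,off \,$Z$ \,to derive a contradiction. The only cosmetic difference is that the paper disposes of points of \,$Z$ \,by noting that the set of analyticity points is open and hence, since \,$Z^0 = \varnothing$, would have to meet \,$\R \setminus Z$, whereas you treat \,$x_0 \in Z$ \,separately via the vanishing of all derivatives; both arguments are valid.
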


\begin{proof}
Again, if (b) holds then \,$Z$ \,is the pre-image of \,$\{0\}$ \,under a continuous function, and hence it must be closed.
If \,$Z^0 \ne \varnothing$ \,then there would be an interval \,$(x_0 - \delta , x_0 + \delta )$ \,on which \,$f = 0$, so yielding the analyticity of \,$f$ \,at \,$x_0$, which contradicts the hypothesis. Then (a) is derived from (b).

\vskip .15cm

As for the reverse implication, assume that \,$Z$ \,is closed and \,$Z^0 = \varnothing$. If $Z= \varnothing$, we fix any bounded \,$\psi \in \mathcal{S}$, for instance the Lerch function \,$\sum_{n=1}^\infty {\cos (3^nx) \over n!}$ (see Section 1; another example can be found in \cite{kimkwon2000}). Then the function \,$\varphi (x) := \psi (x) + \alpha$, where \,$\alpha := 1 + \sup_{x \in \R} |\psi (x)|$, belongs to \,$\mathcal{S}$ \,and \,$\mathcal{Z}_{\varphi} = \varnothing$.

\vskip .15cm

Assume now that \,$Z \ne \varnothing$. According to the last proposition and Remark \ref{remark2}, there is \,$g \in \mathcal{C}^\infty$ \,such that \,$\mathcal{Z}_g = Z$ \,and \,$g$ \,is analytic at each point of \,$\R \setminus Z$. Therefore the function \,$f := \varphi \cdot g$ \,is also in \,$\mathcal{C}^\infty$ \,and satisfies \,$\mathcal{Z}_f = Z$. Our only task is to show that \,$f$ \,is nowhere analytic. Suppose, by way of contradiction, that \,$f$ \,is analytic at some point. Since the set of points of analyticity is always open, and because of \,$Z^0 = \varnothing$, we get that there is an interval \,$J \subset \R \setminus Z$ \,where \,$f$ \,is analytic. Now, as \,$g (x) \ne 0$ \,for all \,$x \in J$, the reciprocal \,$1/g$ \,is also analytic on \,$J$, hence the product \,$f \cdot {1 \over g}$ \,is too. But \,$f \cdot {1 \over g}= \varphi$, which is absurd. The proof is complete.
\end{proof}

\begin{remark} \label{remark3}
Observe that, according to the last proposition, one can get smooth nowhere analytic functions having zeros in sets that are as large in (Lebesgue) measure as one desires. Indeed, given \,$\ve > 0$, choose \,$Z := \R \setminus \bigcup_{n=1}^\infty (q_n - {\ve \over 2^{n+1}},q_n + {\ve \over 2^{n+1}})$, where \,$(q_n)$ \,is an enumeration of \,$\Q$.
\end{remark}

In the case of Pringsheim-singular functions, we have not been able to obtain a cha\-rac\-te\-ri\-za\-tion of zero-sets. Nevertheless, at least discrete sets are among such zero-sets.

\begin{proposition}\label{PropexistsPringsheim}
Let \,$Z$ \,be a subset of \,$\R$ \,without accumulation points. Then there exists a function \,$f \in \mathcal{PS}$ \,such that \,${\mathcal Z}_f = Z$.
\end{proposition}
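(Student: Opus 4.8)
The plan is to realize \,$f$ \,as a product \,$f = \varphi \cdot h$, where \,$\varphi$ \,is a fixed nowhere-vanishing member of \,$\mathcal{PS}$ \,and \,$h$ \,is an entire function with \,${\mathcal Z}_h = Z$. Such an \,$h$ \,is provided by Proposition \ref{Propexistsentire}, since \,$Z$ \,has no accumulation point in \,$\R$ \,(and \,$Z \ne \R$). To obtain \,$\varphi$, I would start from the explicit Pringsheim-singular function \,$g$ \,of Section 1, which is bounded because \,$\sum_{n=1}^\infty b_n^{1-n} < +\infty$; then \,$\varphi := g + \alpha$ \,with \,$\alpha := 1 + \sup_{\R}|g|$ \,is everywhere positive, and \,$\varphi \in \mathcal{PS}$ \,because passing from \,$g$ \,to \,$g + \alpha$ \,changes only the constant term of the Taylor series at each point and so leaves every radius of convergence \,$R(\cdot , x_0)$ \,equal to \,$0$. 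With these choices \,$f := \varphi \cdot h \in {\mathcal C}^\infty$ \,and \,${\mathcal Z}_f = {\mathcal Z}_\varphi \cup {\mathcal Z}_h = \varnothing \cup Z = Z$, so it remains only to check that \,$f \in \mathcal{PS}$.

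The key step is a formal power-series observation applied at an arbitrary \,$x_0 \in \R$. By the Leibniz rule, the Taylor series of \,$f$ \,at \,$x_0$ \,is the Cauchy product of the Taylor series of \,$\varphi$ \,at \,$x_0$, whose radius of convergence is \,$0$ \,(as \,$\varphi \in \mathcal{PS}$), with the Taylor series of \,$h$ \,at \,$x_0$, whose radius of convergence is \,$+\infty$ \,(as \,$h$ \,is entire). If \,$h(x_0) \ne 0$, the latter is a convergent power series with nonzero constant term, hence admits a convergent-power-series reciprocal; were the product series to have positive radius of convergence, then so would the Taylor series of \,$\varphi$ \,at \,$x_0$ \,(recovered by multiplying by that reciprocal), which is absurd. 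If \,$h(x_0) = 0$, write \,$h(x) = (x-x_0)^m h_1(x)$ \,with \,$m \ge 1$ \,the finite order of vanishing and \,$h_1$ \,entire, \,$h_1(x_0) \ne 0$; applying the previous case to \,$\varphi h_1$ \,gives \,$R(\varphi h_1, x_0) = 0$, and since multiplying a Taylor series at \,$x_0$ \,by \,$(x-x_0)^m$ \,only shifts coefficients, \,$R(f, x_0) = R(\varphi h_1, x_0) = 0$. As \,$x_0$ \,is arbitrary, \,$f \in \mathcal{PS}$, and the proof is finished.

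I do not expect a genuine obstacle: the two facts used — that a convergent power series with nonzero constant term has a convergent-power-series reciprocal (equivalently, $1/B$ is analytic near \,$0$ \,when \,$B(0) \ne 0$) and that multiplying a Taylor series at \,$x_0$ \,by \,$(x - x_0)^m$ \,leaves its radius of convergence unchanged — are both elementary. One can package the two cases uniformly by writing \,$h = (x-x_0)^m h_1$ \,with \,$m \ge 0$ \,(so \,$m = 0$, \,$h_1 = h$ \,when \,$x_0 \notin Z$); and the degenerate cases \,$Z = \varnothing$ \,and \,$Z$ \,finite are automatically covered, with \,$h \equiv 1$ \,and \,$h = \prod_{j=1}^N(x-a_j)$ \,respectively.
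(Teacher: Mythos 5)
Your proposal is correct and follows essentially the same route as the paper: factor $f$ as a nowhere-vanishing Pringsheim-singular function (a bounded explicit example plus a large constant) times an entire function with zero set $Z$, then at each $x_0$ strip off the finite-order zero $(x-x_0)^m$ and use the formal-power-series reciprocal of the nonvanishing entire factor to contradict $R(f,x_0)>0$. The only differences are notational (the roles of $\varphi$ and $h$ are swapped) and that you spell out the two cases $h(x_0)\ne 0$ and $h(x_0)=0$ separately where the paper unifies them with $N\in\N_0$.
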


\begin{proof}
Similarly to the proof of Proposition \ref{Propexistssingular}, take a bounded member of \,$\mathcal{PS}$, as for instance the function
\,$g(x) = \sum_{n=1}^\infty b_n^{1-n} \sin (b_n x)$ \,given in Section 1 (with the \,$b_n$'s \,large, as described there). Then the translated function
\,$h(x) := g(x) + 1 + \sup_{\R} |g|$ \,is also Pringsheim-singular and \,$\mathcal{Z}_h = \varnothing$. According to Proposition \ref{Propexistsentire}, there exists an entire function \,$\varphi : \R \to \R$ \,having zeros exactly at the points of \,$Z$. Define the function \,$f:\R \to \R$ \,by
$$
f := h \cdot \varphi .
$$
Plainly, $f \in \mathcal{C}^\infty$ \,and \,$\mathcal{Z}_f = Z$. All that should be proved if that \,$R(f,x_0) = 0$ \,for any given \,$x_0 \in \R$; recall that
\,$R(f,x_0) = (\limsup_{n \to \infty} |f^{(n)}(x_0)/n!|^{1/n})^{-1}$. In order to achieve this, we invoke the theory of formal power series, which can be found, for instance in \cite[Chap.~1]{cartan 1997}. Since the zeros of a nonzero analytic function are isolated, we have that either \,$\varphi (x_0) \ne 0$ \,or \,$x_0$ \,is a zero of \,$\varphi$ \,of certain (finite) order. Altogether, there is \,$N \in \N_0$ \,and \,$\psi$ \,analytic in \,$\R$ \,(in fact, entire) such that \,$\psi(x_0) \ne 0$ (hence \,$\psi(x) \ne 0$ \,for all \,$x$ \,in some neighborhood \,$J$ \,of \,$x_0$) \,and \,$f(x) = (x-x_0)^N \psi (x) h(x)$ \,for all \,$x \in \R$. Then
$$h_0(x) = f(x) \cdot {1 \over \psi (x)} \quad \hbox{for all } x \in J,$$
where we have set \,$h_0 (x) := (x-x_0)^N h(x)$. Assume, by way of contradiction, that \,$R(f,x_0) > 0$. Since \,$1/\psi$ \,is analytic at \,$x_0$, we get \,$R(1/\psi ,x_0) > 0$. According to the theory of formal power series, we obtain \,$R(h_0,x_0) = R(f \cdot (1/\psi ),x_0) > 0$. But \,$R(h,x_0) = R(h_0,x_0)$, because \,${h_0^{(n)}(x_0) \over n!} = {h^{(n-N)}(x_0) \over (n-N)!}$ \,for all \,$n \ge N$. Therefore \,$R(h,x_0) > 0$, which is absurd.
\end{proof}

\begin{remark} \label{preservesingular}Observe that the following is obtained from the last proof: If $f \in \mathcal{PS}_Z$, then $\varphi \cdot f \in \mathcal{PS}_Z$ for any nonzero entire function $\varphi : \R \to \R$.
\end{remark}

\section{Large algebras of singular functions}
In the above section, we were able to prescribe the set $Z_f$ of zeros of a function $f$ satisfying special properties. However, there is {\it no} hope to find a large linear space \,$M$ \,of functions \,$\R \to \R$ \,with {\it exactly} the same prescribed subset of zeros for all \,$f \in M \setminus \{0\}$.
\begin{proposition} Let $Z\subset \R $ and $M$ be a linear space of functions $f: \R \to \R$ such that $Z_f=Z$ for all $f \in M\setminus \{ 0\}$. Then dim\,$(M)$\,$\leq 1$.
\end{proposition}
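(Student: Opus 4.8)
The plan is to show that any two nonzero functions in $M$ must be scalar multiples of one another, which immediately forces $\dim(M) \le 1$. So suppose, for contradiction, that $f, g \in M \setminus \{0\}$ are linearly independent. Both have the same zero set $Z$, and in particular they vanish on exactly the same points and are nonzero on exactly the same points. Fix any point $x_0 \in \R \setminus Z$ (such a point exists: if $Z = \R$ then $f \equiv 0$, contradicting $f \ne 0$). Then $f(x_0) \ne 0$ and $g(x_0) \ne 0$, so we may form the scalar $\lambda := g(x_0)/f(x_0) \ne 0$ and consider the function $h := g - \lambda f$. Since $M$ is a linear space, $h \in M$. By linear independence of $f$ and $g$, we have $h \ne 0$, hence $\mathcal{Z}_h = Z$ as well. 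But $h(x_0) = g(x_0) - \lambda f(x_0) = 0$, so $x_0 \in \mathcal{Z}_h = Z$, contradicting $x_0 \notin Z$.

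The argument is essentially immediate once one picks the right point $x_0$ and the right scalar, so there is no real obstacle; the only thing to be careful about is the degenerate case $Z = \R$, which is disposed of at the outset because a linear space all of whose nonzero elements have zero set $\R$ consists of the zero function alone, giving $\dim(M) = 0 \le 1$. Similarly if $M = \{0\}$ then $\dim(M) = 0$. In all remaining cases the contradiction above shows no two linearly independent functions can lie in $M$, so $\dim(M) \le 1$, completing the proof.

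Note that no smoothness, continuity, or analyticity hypothesis on the members of $M$ is used anywhere: the statement is a purely set-theoretic/linear-algebraic fact about families of functions with a common zero set, which is why it is stated for arbitrary functions $\R \to \R$. This also explains why it serves as a natural counterpoint to the preceding propositions: prescribing the exact zero set is possible for a single function in each class $\mathcal{C}^\infty$, $\mathcal{S}$, $\mathcal{PS}$, but never for an infinite-dimensional — indeed never for a two-dimensional — space, so the strongest lineability results one can hope for in Section 4 must relax ``$\mathcal{Z}_f = Z$'' to ``$\mathcal{Z}_f \subset Z$'', i.e. must work inside $\mathcal{S}_Z$ rather than inside the fibre $\{f : \mathcal{Z}_f = Z\}$.
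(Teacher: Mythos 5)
Your proof is correct and follows essentially the same route as the paper's: both pick a point $x_0 \notin Z$ and form the combination $h = g(x_0)f - f(x_0)g$ (yours is the same function up to the nonzero scalar $1/f(x_0)$), which lies in $M\setminus\{0\}$ by linear independence yet vanishes at $x_0$, contradicting $\mathcal{Z}_h = Z$. Your explicit handling of the degenerate case $Z=\R$ matches the paper's remark that this case is trivial.
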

\begin{proof}
The case \,$Z = \R$ is trivial. Assume that $Z \ne \R$ and $f,g \in M$ \,are linearly independent and choose any \,$x_0 \not\in Z$. Then \,$f(x_0) \ne 0 \ne g(x_0)$. It follows that \,$h := g(x_0)f - f(x_0)g \in M \setminus \{0\}$ \,but \,$h(x_0) = 0$, so \,$\mathcal{Z}_h \ne Z$.
\end{proof}
\vskip .15cm

Although it is not possible to fix the exact set of zeros, we can expect that $Z \subset Z_f$ for every $f \in M \setminus \{0\}$. The following collection of theorems characterize the algebrability of ${\mathcal C}^\infty_Z$ and ${\mathcal S}_Z$. This improves and completes the results contained in Theorems \ref{Thm-Bernal-Bartoszewicz} and \ref{monthly-belgian}.

\begin{theorem}\label{Theoremsmooth}
Let \,$Z$ \,be a subset of \,$\R$. The following properties are equivalent:
\begin{enumerate}
\item[\rm (a)] $Z$ \,is not dense in \,$\R$.
\item[\rm (b)] ${\mathcal C}^\infty_Z \ne \{0\}$.
\end{enumerate}
If this is the case, then \,${\mathcal C}^\infty_Z$ \,is strongly $\mathfrak{c}$-algebrable.
\end{theorem}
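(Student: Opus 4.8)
The plan is to settle the equivalence (a)$\Leftrightarrow$(b) by a short topological argument and then to derive strong $\mathfrak c$-algebrability from Lemma \ref{lemma-exponentials}. For (b)$\Rightarrow$(a), I would take any $f\in\mathcal{C}^\infty_Z$ with $f\not\equiv0$, pick $x_0$ with $f(x_0)\neq0$, and use continuity to get $f\neq0$ on some interval $(x_0-\delta,x_0+\delta)$; since $f$ vanishes on $Z$, that interval misses $Z$, so $Z$ is not dense. For (a)$\Rightarrow$(b), non-density of $Z$ supplies an open interval $(\alpha,\beta)$ with $(\alpha,\beta)\cap\overline Z=\varnothing$, and then the bump $\Phi_{\alpha,\beta}$ of (1) is a nonzero member of $\mathcal{C}^\infty$ vanishing on $\R\setminus(\alpha,\beta)\supseteq Z$, hence a nonzero element of $\mathcal{C}^\infty_Z$.

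For the algebrability assertion, assume (a), fix such an interval $(\alpha,\beta)$, and set $f:=\Phi_{\alpha,\beta}$. I would invoke Lemma \ref{lemma-exponentials} with $\Omega=\R$, $\mathcal F=\mathcal{C}^\infty_Z$, this $f$ as the seed function, and --- the crucial choice --- $a:=-1$, so that $E_r(x)=e^{rx}-1$ and $E_r(0)=0$ for every $r\neq0$. Then I would verify the three hypotheses: (i) $f(\R)$ has an accumulation point in $\R$, which is immediate since $f$ is continuous and non-constant, so $f(\R)$ is a non-degenerate interval; (ii) for every $\varphi$ in the algebra generated by the $E_r$, the composition $\varphi\circ f$ is smooth --- clear, because such a $\varphi$ is a polynomial without constant term in finitely many of the entire functions $x\mapsto e^{rx}-1$, hence entire, while $f\in\mathcal{C}^\infty$ by Remark \ref{remark1}; and (iii) $\varphi\circ f$ vanishes on $Z$ --- here the choice $a=-1$ enters: writing $\varphi=P(E_{r_1},\dots,E_{r_p})$ with $P$ having no constant term gives $\varphi(0)=P(0,\dots,0)=0$, and since $f\equiv0$ on $Z$ we get $(\varphi\circ f)(x)=\varphi(0)=0$ for all $x\in Z$, i.e.\ $\varphi\circ f\in\mathcal{C}^\infty_Z$. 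Lemma \ref{lemma-exponentials} then yields that $\mathcal{C}^\infty_Z$ is strongly $\mathfrak c$-algebrable, with $\{E_r\circ\Phi_{\alpha,\beta}:r\in H\}$ a free system of generators for any $\Q$-linearly independent $H\subseteq(0,+\infty)$ of cardinality $\mathfrak c$.

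I do not anticipate a serious obstacle. The one point deserving attention is that Lemma \ref{lemma-exponentials} must be applied with $a=-1$ rather than with the value $a=0$ of the sources it is adapted from: one needs the composed generators $E_r\circ f$ to vanish precisely where the seed function does, and $e^{rx}-1$ vanishes exactly at $x=0$, which is why a seed function supported off $\overline Z$ --- so that $f|_Z\equiv0$ --- is the right choice. Checking that this seed is smooth and that its range has an accumulation point is routine via Remark \ref{remark1}.
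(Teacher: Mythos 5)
Your proposal is correct and follows essentially the same route as the paper: the equivalence is settled by producing a nonzero smooth function vanishing on $\overline{Z}$ (the paper cites Proposition~\ref{Propexistssmooth}, whose proof rests on the same bump functions $\Phi_{a,b}$ you use directly), and the algebrability is obtained by applying Lemma~\ref{lemma-exponentials} with $\Omega=\R$, $a=-1$ and $\mathcal F=\mathcal{C}^\infty_Z$, observing that each $E_r\circ f$ vanishes on $Z$ because $E_r(0)=0$. Your choice of the explicit, nonconstant seed $\Phi_{\alpha,\beta}$ even sidesteps a small edge case (a constant nonzero $f$ when $Z=\varnothing$) that the paper's ``take any $f\in\mathcal{C}^\infty_Z\setminus\{0\}$'' glosses over.
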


\begin{proof}
The equivalence of (a) and (b) follows directly from Proposition \ref{Propexistssmooth}. Now, take any function \,$f \in {\mathcal C}^\infty_Z \setminus \{0\}$ \,and apply Lemma \ref{lemma-exponentials} \,with \,$\Omega =\R$, $a = -1$ \,and \,$\mathcal{F} = {\mathcal C}^\infty_Z$ (note that \,$f(\Omega )$ \,is a non-degenerate interval, so it has some accumulation point). This furnishes a free $\mathfrak{c}$-generated algebra \,$\mathcal{B} \subset \mathcal{C}^\infty$. Moreover, each generating function \,$e^{r f(x)} -1$ \,vanishes on \,$Z$, so each member of \,$\mathcal B$ \,also does. In other words, $\mathcal{B} \subset {\mathcal C}^\infty_Z$, which proves the theorem.
\end{proof}

\begin{theorem}\label{Theoremsingular}
Let \,$Z$ \,be a subset of \,$\R$. The following properties are equivalent:
\begin{enumerate}
\item[\rm (a)] $Z$ \,is nowhere dense in \,$\R$.
\item[\rm (b)] ${\mathcal S}_Z \ne \varnothing$.
\end{enumerate}
If this is the case, then \,${\mathcal S}_Z$ \,is strongly $\mathfrak{c}$-algebrable.
\end{theorem}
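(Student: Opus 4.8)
The plan is to settle the equivalence directly and then, assuming~(a), to reduce the algebrability assertion to Lemma~\ref{lemma-exponentials} in the same way as in the proof of Theorem~\ref{Theoremsmooth}; the one new ingredient needed is that post-composing a nowhere analytic function with a non-constant entire function again produces a nowhere analytic function.

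For (b)$\Rightarrow$(a) I would note that if $f\in\mathcal S_Z$, then $f$ is smooth and analytic at no point, so $f$ is constant on no nondegenerate interval; hence $\mathcal Z_f=f^{-1}(\{0\})$ is closed with empty interior, i.e.\ nowhere dense, and so is its subset $Z$. For (a)$\Rightarrow$(b): when $Z$ is nowhere dense, $\overline Z$ is closed with $(\overline Z)^{0}=\varnothing$, so Proposition~\ref{Propexistssingular} provides $f\in\mathcal S$ with $\mathcal Z_f=\overline Z\supseteq Z$, i.e.\ $f\in\mathcal S_Z$; in particular $\mathcal S_Z\neq\varnothing$.

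Assuming~(a), I would fix any $g_0\in\mathcal S_Z$; it is smooth and non-constant (a constant being analytic), so $g_0(\R)$ is a nondegenerate interval and thus has accumulation points in $\R$. I would then apply Lemma~\ref{lemma-exponentials} with $\Omega=\R$, $a=-1$, $f=g_0$ and $\mathcal F=\mathcal S_Z$, so that it only remains to check that $\varphi\circ g_0\in\mathcal S_Z$ for every $\varphi$ in the algebra generated by the functions $E_r(x)=e^{rx}-1$ ($r\in\R\setminus\{0\}$), that is, for every $\varphi=P(E_{r_1},\dots,E_{r_p})$ with $P$ a real polynomial without constant term. Such a $\varphi$ extends to an entire function, $\varphi\circ g_0$ is smooth, and since $E_r(0)=0$ we get $\varphi(0)=P(0,\dots,0)=0$, so $Z\subseteq\mathcal Z_{g_0}$ forces $(\varphi\circ g_0)|_{Z}\equiv0$; moreover, if $\varphi\not\equiv0$ it is non-constant, because its value at $0$ is $0$.

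The crux is the auxiliary claim that \emph{if $s\in\mathcal S$ and $\varphi$ is a non-constant entire function, then $\varphi\circ s\in\mathcal S$}. I would prove it by contradiction: if $\varphi\circ s$ were analytic on a nonempty open interval $I$, then, since $\varphi'$ is entire and not identically $0$, its zero set is discrete and $(\varphi')^{-1}(0)\cap\R$ is countable, while each level set $s^{-1}(\{c\})=\mathcal Z_{s-c}$ is closed with empty interior (otherwise $s$ would be constant, hence analytic, on an interval); therefore $\{x\in I:\varphi'(s(x))=0\}$ is a countable union of nowhere dense sets, so meager, and Baire's theorem yields $x_1\in I$ with $\varphi'(s(x_1))\neq0$. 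The real-analytic inverse function theorem then gives a real-analytic $\eta$ near $\varphi(s(x_1))$ with $\eta\circ\varphi=\mathrm{id}$ near $s(x_1)$, so on a small subinterval $I'\ni x_1$ of $I$ we have $s|_{I'}=\eta\circ(\varphi\circ s)|_{I'}$, a composition of analytic maps, making $s$ analytic on $I'$, which is impossible. Granting this, $\varphi\circ g_0\in\mathcal S_Z$ for all the $\varphi$'s above, and Lemma~\ref{lemma-exponentials} then shows that $\{E_r\circ g_0:\,r\in H\}$ freely generates an algebra contained in $\mathcal S_Z\cup\{0\}$ whenever $H\subseteq(0,+\infty)$ is $\Q$-linearly independent with $\mathrm{card}(H)=\mathfrak c$; hence $\mathcal S_Z$ is strongly $\mathfrak c$-algebrable. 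The only genuinely delicate point is the auxiliary claim, and in it the handling of the points where $\varphi'\circ s$ vanishes, which is precisely where the Baire category argument is needed.
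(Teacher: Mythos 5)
Your proof is correct and follows essentially the same route as the paper: the equivalence is read off from Proposition~\ref{Propexistssingular}, and the algebrability is reduced to Lemma~\ref{lemma-exponentials} with $a=-1$ by showing that post-composition with a non-constant analytic function preserves nowhere analyticity, via a local analytic inverse at a point where the derivative of the outer function does not vanish. The only immaterial difference is how that point is located: the paper observes that the non-degenerate interval $f(J)$ cannot be contained in the countable set $\mathcal{Z}_{Q'}$, whereas you run a Baire category argument on the level sets of $f$ --- both work.
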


\begin{proof}
The equivalence of (a) and (b) follows directly from Proposition \ref{Propexistssingular}.
Let us prove the strong $\mathfrak{c}$-algebrability of  \,${\mathcal S}_Z$.
Similarly to the proof of Theorem \ref{Theoremsmooth},
take any function \,$f \in {\mathcal S}_Z$ \,and apply Lemma \ref{lemma-exponentials} \,with \,$\Omega =\R$, $a = -1$ \,and
\,$\mathcal{F} = {\mathcal S}_Z$.  This provides a free $\mathfrak{c}$-generated algebra \,$\mathcal{B} \subset \mathcal{C}^\infty$ \,all of whose members vanish on \,$Z$.

\vskip .15cm

It remains to prove that every \,$F \in \mathcal{B} \setminus \{0\}$ \,belongs to \,$\mathcal S$. Observe that such an \,$F$ \,has the form
$$
F = Q \circ f,
$$
where \,$Q(x) = P(e^{r_1 x} -1, \dots ,e^{r_p x} - 1)$, $P$ \,is a nonzero polynomial without constant term and \,$r_1, \dots ,r_p \in H$, so that \,$r_1, \dots ,r_p$ \,are positive and linearly $\Q$-independent. Then \,$Q$ \,is not identically zero (see the proof of Lemma \ref{lemma-exponentials}). Therefore \,$Q$ \,is analytic in \,$\R$ \,and nonconstant. Hence the set \,$\mathcal{Z}_{Q'} = \{x \in \R : \, Q'(x) = 0\}$ \,has no accumulation points, so it is countable.
Suppose, by way of contradiction, that \,$F \not\in \mathcal{S}$. Since the set of analyticity points is open, there is an open interval \,$J$ \,where \,$F$ \,is analytic. Since \,$f$ \,cannot be constant on any non-degenerate interval, the image \,$f(J)$ \,is again a non-degenerate interval. Therefore \,$f(J) \setminus
\mathcal{Z}_{Q'} \ne \varnothing$. Consequently, there is \,$x_0 \in J$ \,such that \,$Q'(f(x_0)) \ne 0$. Hence (see \cite[Chap.~1]{cartan1997}) there are open intervals \,$C$ \,and \,$D$ \,such that \,$f(x_0) \in C \subset f(J)$, $F(x_0) \in D \subset F(J)$, $Q : C \to D$ \,is bijective and \,$Q^{-1} : D \to C$ \,is analytic. But \,$f = Q^{-1} \circ F$ \,on the open set \,$f^{-1}(C)$ (which is nonempty because it contains \,$x_0$). Finally, the composition of analytic functions is also analytic, so \,$f$ \,should be analytic at \,$x_0$. This is the desired contradiction.
\end{proof}

Concerning the family \,$\mathcal{PS}$, we have done several findings, but they are not as far-reaching as those obtained for \,$\mathcal{S}$.

\begin{theorem}\label{TheoremPringsheimsingular}
\begin{enumerate}
\item[\rm (a)] If \,$Z$ \,is a subset of \,$\R$ \,without accumulation points then \,$\mathcal{PS}_Z$ \,is maximal lineable in \,$\mathcal{C}^\infty$.
\item[\rm (b)] The set \,$\mathcal{PS}$ \,is maximal dense-lineable in \,${\mathcal C}^\infty$.
%\,and \item[\rm (c)] The set \,$\mathcal{PS}$ \,is strongly algebrable.
\end{enumerate}
\end{theorem}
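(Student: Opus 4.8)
The plan is to prove both parts at once with a single device: multiply a fixed Pringsheim-singular function by a $\mathfrak{c}$-dimensional space of entire functions, using Proposition~\ref{PropexistsPringsheim} and Remark~\ref{preservesingular} as the only nontrivial inputs. Let $\mathcal{E}$ denote the linear span of all polynomials together with all exponentials $e^{\lambda x}$ ($\lambda\in\R$); every member of $\mathcal{E}$ is (the restriction to $\R$ of) an entire function, and $\dim\mathcal{E}=\mathfrak{c}$: the functions $e^{\lambda x}$ are linearly independent, so $\dim\mathcal{E}\ge\mathfrak{c}$, while $\mathcal{E}\subseteq\mathcal{C}^\infty$ and $\dim\mathcal{C}^\infty=\mathfrak{c}$ (recall $\mathcal{C}^\infty$ has cardinality $\mathfrak{c}$).

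For part~(a), use Proposition~\ref{PropexistsPringsheim} to choose $f_0\in\mathcal{PS}_Z$ with $\mathcal{Z}_{f_0}=Z$, and put $M:=f_0\cdot\mathcal{E}=\{f_0\varphi:\varphi\in\mathcal{E}\}$, a linear subspace of $\mathcal{C}^\infty$. Since $Z$ has no accumulation point it is closed and nowhere dense, so $\R\setminus Z$ is dense; hence the linear map $\varphi\mapsto f_0\varphi$ is injective (if $f_0\varphi\equiv 0$ then $\varphi$ vanishes on the dense set $\R\setminus Z$, so $\varphi\equiv 0$), and therefore $\dim M=\dim\mathcal{E}=\mathfrak{c}$. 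Finally, any nonzero $F\in M$ has the form $F=f_0\varphi$ with $\varphi$ a nonzero entire function, so $F\in\mathcal{PS}_Z$ by Remark~\ref{preservesingular}. Thus $M\setminus\{0\}\subseteq\mathcal{PS}_Z$, which is the asserted maximal lineability.

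For part~(b) we run the same construction with $Z=\varnothing$. Proposition~\ref{PropexistsPringsheim}, applied to the empty set, yields $h\in\mathcal{PS}$ with $\mathcal{Z}_h=\varnothing$; then $1/h\in\mathcal{C}^\infty$, so by the Leibniz rule both multiplication operators $g\mapsto gh$ and $g\mapsto g/h$ are continuous on $\mathcal{C}^\infty$, i.e.\ multiplication by $h$ is a linear homeomorphism of $\mathcal{C}^\infty$ onto itself. Set $M:=h\cdot\mathcal{E}$. Exactly as in~(a), $M$ is a subspace, $\dim M=\mathfrak{c}$, and $M\setminus\{0\}\subseteq\mathcal{PS}=\mathcal{PS}_\varnothing$. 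Moreover $M$ is dense: the polynomials are dense in $\mathcal{C}^\infty$ (approximate the top-order derivative by a polynomial via Weierstrass' theorem and integrate it back up), so their image $h\cdot\R[x]$ under the homeomorphism is dense, and it is contained in $M$. Hence $M$ is a dense subspace of $\mathcal{C}^\infty$ with $\dim M=\dim\mathcal{C}^\infty$ and $M\setminus\{0\}\subseteq\mathcal{PS}$, which is the maximal dense-lineability of $\mathcal{PS}$; this strengthens Theorem~\ref{Thm-Bernal-Bartoszewicz}(a).

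Once Proposition~\ref{PropexistsPringsheim} and Remark~\ref{preservesingular} are available the argument is routine, so the only points that need a little attention are the dimension bookkeeping ($\dim\mathcal{E}=\dim\mathcal{C}^\infty=\mathfrak{c}$ and injectivity of $\varphi\mapsto f_0\varphi$) and, in part~(b), the density of $M$, whose essence is that multiplication by the nowhere-vanishing $h$ is a self-homeomorphism of $\mathcal{C}^\infty$ transporting the dense subspace of polynomials into $M$. It is worth noting why part~(b) is restricted to $Z=\varnothing$: for $Z\ne\varnothing$, pick $z_0\in Z$; every $f\in\mathcal{PS}_Z$ satisfies $f(z_0)=0$, so $\mathcal{PS}_Z$ lies in the proper closed hyperplane $\{f\in\mathcal{C}^\infty:f(z_0)=0\}$ and cannot be dense---precisely the place where the homeomorphism argument, which needs $1/f_0\in\mathcal{C}^\infty$, fails.
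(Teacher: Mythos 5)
Your proposal is correct and follows essentially the same route as the paper: both parts are obtained by multiplying a function $f_0\in\mathcal{PS}$ with $\mathcal{Z}_{f_0}=Z$ (from Proposition~\ref{PropexistsPringsheim}) by a $\mathfrak{c}$-dimensional space of entire functions and invoking Remark~\ref{preservesingular}, with density in part (b) coming from the density of the polynomials and the fact that $f_0$ has no zeros. The only differences are cosmetic --- you take the span of polynomials and exponentials where the paper takes all entire functions, and you spell out the self-homeomorphism $g\mapsto hg$ that the paper leaves implicit in the line ``$f\cdot\mathcal{C}^\infty=\mathcal{C}^\infty$.''
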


\begin{proof} (a) According to Proposition \ref{PropexistsPringsheim}, there exists \,$f \in \mathcal{PS}$ \,such that \,$\mathcal{Z}_f = Z$. In particular, $f \in \mathcal{PS}_Z$ and, by Remark \ref{preservesingular}, \,$\varphi f \in \mathcal{PS}_Z$ \,for each nonzero \,$\varphi \in \mathcal{E} := \{$entire functions$\}$. Then the vector space \,$M := \{\varphi f : \, \varphi \in \mathcal{E}\}$ \,is contained in \,$\mathcal{PS}_Z \cup \{0\}$. Now, it is well known  that \,dim$(\mathcal{E}) = \mathfrak{c}$ (for instance, the exponentials \,$x \mapsto e^{cx}$, $c > 0$, are linearly independent). From this and the fact that neither \,a nonzero entire function \,$\varphi$ \,nor \,$f$ \,can vanish on a somewhere dense set it follows that \,dim$(M) = \mathfrak{c} = {\rm dim} (\mathcal{C}^\infty)$, as required.

\vskip .15cm

\noindent (b) Taking \,$Z = \varnothing$ in part (a) yields the existence of a function \,$f \in \mathcal{PS}$ \,having no zeros such that the vector space \,$M = \{\varphi f : \, \varphi \in \mathcal{E}\}$ \,is contained in \,$\mathcal{PS} \cup \{0\}$. Since the polynomials form a dense subset of \,$\mathcal{C}^\infty$ \,and are contained in \,$\mathcal E$, we obtain that \,$\mathcal E$ \,is dense in \,$\mathcal{C}^\infty$. Therefore \,$M = f \cdot \mathcal{E}$ \,is dense in \,$f \cdot \mathcal{C}^\infty$. But \,$f \cdot \mathcal{C}^\infty = \mathcal{C}^\infty$ \,because \,$f$ \,lacks zeros. Altogether, we get that \,$M$ \,is a dense
$\mathfrak{c}$-dimensional linear subspace of \,$\mathcal{PS}$, which proves (b).
\end{proof}

\begin{remark}
{\rm Part (b) of the last theorem is known for {\it complex-valued} smooth functions on \,$\R$ \cite[Theorem 4.7(c)]{bernalordonez2014}, but not (as far as we know) for real functions.}
\end{remark}

\vskip .15cm

In the next and final theorem, algebrability (in a rather high degree) is obtained for the family of smooth functions having Pringsheim singularities at almost all (in a topological sense) points of \,$\R$. To be more precise, we consider the family
\begin{equation*}
\begin{split}
\phantom{aaaaaaaaa} \widetilde{\mathcal{PS}} := \{f \in \mathcal{C}^\infty : \, &\hbox{exists an open dense set } \,G = G(f) \hbox{ \,such that } \,f \\
                                                        &\hbox{has a Pringsheim singularity at each point of } \,G\}. \qquad \quad \,(3)
\end{split}
\end{equation*}
Notice that each set \,$G(f)$ \,in (3), being open and dense, is also residual in \,$\R$.
Notice also that \,$\mathcal{PS} \subset \widetilde{\mathcal{PS}} \subset \mathcal{S}$, where the first inclusion is obtained by taking \,$G(f) = \R$ \,and the second one follows from the fact that the points where a function is analytic is open. Then the following result improves part (c) of Theorem \ref{Thm-Bernal-Bartoszewicz}.

\begin{theorem} \label{Thm-CasiPringsheimsingular}
The set \,$\widetilde{\mathcal{PS}}$ \,is densely strongly $\mathfrak{c}$-algebrable in \,$\mathcal{C}$.
\end{theorem}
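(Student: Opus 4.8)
The plan is to build a single free, $\mathfrak{c}$-generated algebra $\mathcal B$ via Lemma \ref{lemma-exponentials}, arranged so that every nonzero member of $\mathcal B$ lies in $\widetilde{\mathcal{PS}}$ while $\mathcal B$ is simultaneously dense in $\mathcal C$. The inner function fed into Lemma \ref{lemma-exponentials} should be an auxiliary $\Psi\in\mathcal{PS}$ that is bounded, strictly increasing, and whose range $\Psi(\R)$ is a bounded open interval lying in $(0,+\infty)$. To obtain such a $\Psi$, I would start from a bounded $g_0\in\mathcal{PS}$ as in Section 1, set $h(t):=(1+t^2)^{-1}\bigl(g_0(t)+1+\sup_{\R}|g_0|\bigr)$ --- this $h$ is $C^\infty$, positive, bounded and integrable, and nowhere analytic, being the product of the everywhere nonvanishing analytic function $(1+t^2)^{-1}$ by a nowhere analytic one, and in fact $h\in\mathcal{PS}$ by the formal--power--series argument used in the proof of Proposition \ref{PropexistsPringsheim} (since $R(h,x_0)=R(g_0,x_0)=0$) --- and then put $\Psi(x):=1+\int_{-\infty}^{x}h(t)\,dt$. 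Then $\Psi$ is $C^\infty$, strictly increasing (as $\Psi'=h>0$), bounded with $\Psi(\R)=(1,1+L)$, $L=\int_{\R}h(t)\,dt<\infty$, hence a homeomorphism of $\R$ onto $(1,1+L)$; and $\Psi\in\mathcal{PS}$, because adding a constant and integrating leave the radii of convergence unchanged, so $R(\Psi,x_0)=R(h,x_0)=0$.

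Next I would apply Lemma \ref{lemma-exponentials} with $\Omega=\R$, $a=-1$, $f=\Psi$ (its range is a nondegenerate interval, so it has accumulation points) and $\mathcal F=\mathcal C^\infty$. This produces a free $\mathfrak{c}$-generated algebra $\mathcal B\subset\mathcal C^\infty\subset\mathcal C$, generated by $\{e^{r\Psi}-1:\ r\in H\}$ for a $\Q$-independent set $H\subset(0,+\infty)$ with $\mathrm{card}(H)=\mathfrak{c}$. Two things then remain. The first is to show that every nonzero $G\in\mathcal B$ lies in $\widetilde{\mathcal{PS}}$. Such a $G$ equals $Q\circ\Psi$ where $Q$ is a nonzero element of the algebra generated by the functions $E_r(y)=e^{ry}-1$, so (as in the proof of Lemma \ref{lemma-exponentials}) $|Q(x)|\to+\infty$ along a half-line, whence $Q$ is a nonconstant entire function, $Q'$ is entire and $\not\equiv 0$, and $\mathcal Z_{Q'}$ is discrete. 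Since $\Psi$ is a homeomorphism onto its image, $\Psi^{-1}(\mathcal Z_{Q'})$ is a discrete --- hence closed and nowhere dense --- subset of $\R$, so $G(G):=\R\setminus\Psi^{-1}(\mathcal Z_{Q'})$ is open and dense. At each $x_0\in G(G)$ one has $Q'(\Psi(x_0))\ne 0$, so by the analytic inverse function theorem ($\cite{cartan1997}$, Chap.~1) $Q$ has an analytic local inverse near $\Psi(x_0)$ and $\Psi=Q^{-1}\circ G$ near $x_0$; passing to Taylor series at $x_0$ (Fa\`a di Bruno) and using that the composition of a convergent power series with a convergent power series of zero constant term is convergent, the assumption $R(G,x_0)>0$ would force $R(\Psi,x_0)>0$, contradicting $\Psi\in\mathcal{PS}$. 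Hence $R(G,x_0)=0$ on the open dense set $G(G)$, i.e.\ $G\in\widetilde{\mathcal{PS}}$.

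The second remaining point is density of $\mathcal B$ in $\mathcal C$. Given $\phi\in\mathcal C$, $N\in\N$ and $\varepsilon>0$, I would put $[c,d]:=\Psi([-N,N])\subset(1,1+L)$ and observe that the real subalgebra of $\mathcal C([c,d])$ generated by $\{y\mapsto e^{ry}-1:\ r\in H\}$ separates points of $[c,d]$ (each generator is strictly increasing, and $H\ne\varnothing$) and vanishes at no point of $[c,d]$ (because $0\notin[c,d]$), hence is dense by the Stone--Weierstrass theorem. Pulling this back through the homeomorphism $\Psi|_{[-N,N]}$ then yields a member $Q\circ\Psi\in\mathcal B$ with $\sup_{[-N,N]}|Q\circ\Psi-\phi|<\varepsilon$. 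Since every compact subset of $\R$ is contained in some $[-N,N]$, this shows $\mathcal B$ dense in $\mathcal C$, completing the argument.

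The step I expect to be the real obstacle is this last one: obtaining density \emph{inside} $\widetilde{\mathcal{PS}}$ rather than merely inside $\mathcal C^\infty$. This is exactly what forces the careful choice of the inner function $\Psi$ --- strictly monotone, with range a bounded interval bounded away from $0$ --- so that Stone--Weierstrass can be invoked for the exponential algebra on the compact subintervals of $\Psi(\R)$ that occur, and so that the preimages $\Psi^{-1}(\mathcal Z_{Q'})$ stay discrete. By contrast, the verification that each $G\in\mathcal B\setminus\{0\}$ belongs to $\widetilde{\mathcal{PS}}$ is essentially the Pringsheim refinement of the inverse-function argument already used in the proof of Theorem \ref{Theoremsingular}, the only genuinely new ingredient being the bookkeeping that exhibits the open dense set $G(G)$.
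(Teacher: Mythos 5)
Your proof is correct and follows essentially the same route as the paper's: Lemma \ref{lemma-exponentials} applied to an injective antiderivative of a strictly positive Pringsheim-singular function, the inverse-function/formal-power-series argument to place each nonzero algebra element in $\widetilde{\mathcal{PS}}$, and Stone--Weierstrass for density in $\mathcal{C}$. The only (harmless) deviations are your choice $a=-1$ in place of the paper's $a=0$ --- which is what forces you to normalize the range of $\Psi$ into an interval bounded away from $0$ --- and your more explicit exhibition of the open dense set $\R\setminus\Psi^{-1}(\mathcal{Z}_{Q'})$, which if anything makes the membership step cleaner than the paper's argument by contradiction.
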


\begin{proof}
Similarly to the proof of Theorem \ref{Theoremsingular},
take any function \,$f \in \mathcal{PS}$ \,and apply Lemma \ref{lemma-exponentials} \,with \,$\Omega =\R$, $a = 0$ \,and
\,$\mathcal{F} = \widetilde{\mathcal{PS}}$.  This provides a free $\mathfrak{c}$-generated algebra \,$\mathcal{B} \subset \mathcal{C}^\infty$.

\vskip .15cm

It must be shown that every \,$F \in \mathcal{B} \setminus \{0\}$ \,belongs to \,$ \widetilde{\mathcal{PS}}$. To this end, we follow closely the proof of Theorem \ref{Theoremsingular}. Such a function \,$F$ \,has the form
\,$F = Q \circ f$,
where \,$Q(x) = P(e^{r_1 x}, \dots ,e^{r_p x})$, $P$ \,is a nonzero polynomial without constant term and \,$r_1, \dots ,r_p \in H$, so that \,$r_1, \dots ,r_p$ \,are positive and linearly $\Q$-independent. This forces \,$Q$ \,to be not identically zero. Therefore \,$Q$ \,is analytic in \,$\R$ \,and nonconstant. Hence the set \,$\mathcal{Z}_{Q'} = \{x \in \R : \, Q'(x) = 0\}$ \,has no accumulation points.
Suppose, by way of contradiction, that \,$F \not\in \widetilde{\mathcal{PS}}$. Then there exists an open interval \,$J$ \,such that \,$R(F,x) > 0$ \,for all \,$x \in J$. Since \,$f$ \,is continuous but it cannot be constant on any non-degenerate interval, the image \,$f(J)$ \,is again a non-degenerate interval. Therefore \,$f(J) \setminus
\mathcal{Z}_{Q'} \ne \varnothing$. Consequently, there is \,$x_0 \in J$ \,such that \,$Q'(f(x_0)) \ne 0$.
Now, we invoke again the theory of formal power series.
There are open intervals \,$C$ \,and \,$D$ \,such that \,$f(x_0) \in C \subset f(J)$, $F(x_0) \in D \subset F(J)$, $Q : C \to D$ \,is bijective and \,$Q^{-1} : D \to C$ \,is analytic. In particular, $R(Q^{-1}, F(x_0)) > 0$. But \,$R(F,x_0) > 0$ (because \,$x_0 \in J$) \,and \,$f = Q^{-1} \circ F$ \,in a neighborhood of \,$x_0$. Therefore the formal Taylor series of \,$f$ \,at \,$x_0$ \,is the composition of the formal Taylor series of \,$Q^{-1}$ \,and \,$f$ \,at \,$F(x_0)$ \,and \,$x_0$, respectively. It follows (see \cite[Chap.~1]{cartan1997}) that \,$R(f,x_0) > 0$, which contradicts the fact \,$f \in\mathcal{PS}$.

\vskip .15cm

Finally, as in \cite[Section 6]{bartoszewiczbieniasfilipczakglab2014}, we use a version of the Stone--Weierstrass theorem (see, e.g., \cite{rudin1991}) to prove the density of the algebra \,$\mathcal{B}$ \,in \,$\mathcal{C}$.  According to this theorem, if for given distinct points \,$x_1,x_2 \in \R$ \,one could find \,$\varphi, \, \psi \in \mathcal{B}$ \,such that \,$\varphi (x_1) \ne 0$ \,and \,$\psi (x_1) \ne \psi (x_2)$, then one would have the desired density. But this is easily achieved just by choosing \,$\varphi (x) = \psi (x) = e^{f(x)}$. Indeed, the property \,$\varphi (x_1) \ne 0$ \,is evident, and the required second inequality is feasible because \,$x \mapsto e^x$ \,is injective and the function \,$f$ \,can be chosen to be injective as well. For this, suffice it to choose a strictly positive \,$h \in \mathcal{PS}$ \,(like the one in the beginning of the proof of Proposition \ref{PropexistsPringsheim}) \,and then to select \,$f(x) := \int_0^x h(t) \,dt$. The proof is concluded.
\end{proof}

\begin{remark}
{\rm One may wonder whether the inclusions \,$\mathcal{PS} \subset \widetilde{\mathcal{PS}} \subset \mathcal{S}$ \,are strict. The answer is positive, but in fact most singularities of a nowhere analytic function are Pringsheim. Indeed, a result published by Zahorski \cite{zahorski1947} in 1947 establishes the exact structure of the set of these singularities. Namely, if for each \,$f \in \mathcal{C}^\infty$ \,we call \,$PS(f) = \{x \in \R : \,x$ is a Pringsheim singularity for $f\}$ \,and \,$CS(f) = \{x \in \R \setminus PS(f): \,f$ is not analytic at $x\}$ (the set of ``Cauchy singularities'' of \,$f$) \,then,
given two subsets \,$A,\,B \subset \R$, we have that there exists \,$f \in \mathcal{C}^\infty$ \,such that \,$PS(f) = A$ \,and \,$CS(f) = B$ \,if and only if
\,$A$ \,is a $G_\delta$ subset, $B$ \,is an $F_\sigma$ subset of the first category, $A \cup B$ \,is closed and \,$A \cap B = \varnothing$.
Consequently, if \,$f \in \mathcal{S}$ \,then \,$PS(f) \cup CS(f) = \R$ and, therefore, $PS(f)$ \,is a dense $G_\delta$ subset, hence residual. Despite this, choosing \,$(A = \R \setminus \{0\},B = \{0\})$ \,and \,$(A = \R \setminus \Q ,B = \Q )$ \,provides, respectively, functions
\,$f \in \widetilde{\mathcal{PS}} \setminus \mathcal{PS}$ \,and \,$f \in \mathcal{S} \setminus \widetilde{\mathcal{PS}}$.}
\end{remark}

\begin{remark}
{\rm In \cite{bastinessernikolay2012} the authors study the genericity of \,$\mathcal{S}$ \,in a measure-theoretic sense. In fact, this kind of genericity as well as the residuality in \,$\mathcal{C}^\infty$ \,are shown to be true for the smaller family \,$\mathcal{NG}$ \,of {\it nowhere Gevrey differentiable functions.} By definition, a function \,$f \in \mathcal{C}^\infty$ \,belongs to \,$\mathcal{NG}$ \,provided that,
for every \,$(x_0,s,\delta ,C,h) \in \R \times (1,+\infty ) \times (0,+\infty )^3$, there exists
\,$(x,n) \in (x_0 - \delta , x_0 + \delta ) \times \N_0$ \,such that
$$|f^{(n)}(x)| > C \,h^n (n!)^s.$$
In \cite{bastinconejeroesserseoane2014} Bastin et {\it al.}~established that \,$\mathcal{NG}$ \,is densely $\mathfrak{c}$-algebrable in \,$\mathcal{C}^\infty$.
As a matter of fact, $\mathcal{NG}$ \,is strongly $\mathfrak{c}$-algebrable (see \cite[Chap.~2]{aronbernalpellegrinoseoane2015}).
In particular, this result covers part (b) and complements part (c) of Theorem \ref{Thm-Bernal-Bartoszewicz}.}
\end{remark}

\noindent We want to finish this paper by listing a collection of {\bf open questions:}
\begin{enumerate}
\item[\bf A.] Is \,$\mathcal{PS}$ \,(strongly $\mathfrak{c}$-) algebrable?
\item[\bf B.] In view of Theorems \ref{Thm-Bernal-Bartoszewicz}(c) and \ref{Thm-CasiPringsheimsingular}, is \,$\mathcal{PS}$ \,densely strongly $\mathfrak{c}$-algebrable in \,$\mathcal C$? Do these density properties hold with respect to the space \,$\mathcal{C}^\infty$?
\item[\bf C.] For appropriate sets \,$Z$, are \,$\mathcal{S}_Z$ \,and \,$\mathcal{PS}_Z$ \,{\it densely} (strongly) algebrable in \,$\mathcal{C}_Z$ (or better, in \,$\mathcal{C}^\infty_Z$)?
\item[\bf D.] In the same vein, what can be said about lineability properties of \,$\mathcal{NG}_Z := \{f \in \mathcal{NG}: \,Z \subset \mathcal{Z}_f\}$?
\end{enumerate}

%\vskip .15cm
%
%\noindent {\bf Acknowledgements.} %[DECIDIR SI ES AQUI O EN LA PRIMERA PAGINA !!!!!]
%The authors have been partially supported by the Plan
%Andaluz de Investigaci\'on de la Junta de Andaluc\'{\i}a FQM-127
%Grant P08-FQM-03543 and by MEC Grant MTM2012-34847-C02-01.

\vskip .15cm

%%%%%%%%%%
%REFERENCES %
%%%%%%%%%%

\begin{bibdiv}
\begin{biblist}

\bib{aronbernalpellegrinoseoane2015}{book}{
  author={Aron, R.M.},
  author={Bernal-Gonz\'alez, L.},
  author={Pellegrino, D.}
  author={Seoane-Sep\'{u}lveda, J.B.},
  title={Lineability: The search for linearity in Mathematics, Monographs and Research Notes in Mathematics},
  series={Monographs and Research Notes in Mathematics},
  publisher={Chapman \& Hall/CRC},
  place={Boca Raton, FL},
  date={2016},                  %isbn={978-1-48-229909-0},
}

\bib{arongurariyseoane2005}{article}{
  author={Aron, R.M.},
  author={Gurariy, V.I.},
  author={Seoane-Sep\'{u}lveda, J.B.},
  title={Lineability and spaceability of sets of functions on \(\Bbb R\)},
  journal={Proc. Amer. Math. Soc.},
  volume={133},
  date={2005},
  number={3},
  pages={795--803},
}

\bib{aronperezseoane2006}{article}{
  author={Aron, R.M.},
  author={P\'{e}rez-Garc{\'{\i }}a, D.},
  author={Seoane-Sep\'{u}lveda, J.B.},
  title={Algebrability of the set of nonconvergent Fourier series},
  journal={Studia Math.},
  volume={175},
  date={2006},
  number={1},
  pages={83--90},
}

%\bib{aronseoane2007}{article}{
%  author={Aron, R.M.},
%  author={Seoane-Sep\'{u}lveda, J.B.},
%  title={Algebrability of the set of everywhere surjective functions on $\mathbb C$},
%  journal={Bull. Belg. Math. Soc. Simon Stevin},
%  volume={14},
%  date={2007},
%  number={1},
%  pages={25--31},
%}

\bib{balcerzakbartoszewiczfilipczak2013}{article}{
  author={Balcerzak, M.},
  author={Bartoszewicz, A.},
  author={Filipczac, M.},
  title={Nonseparable spaceability and strong algebrability of sets of continuous singular functions},
  journal={J. Math. Anal. Appl.},
  volume={407},
  date={2013},
  pages={263--269},    %No.2
}

\bib{bartoszewiczbieniasfilipczakglab2014}{article}{
   author={Bartoszewicz, A.},
   author={Bienias, M.},
   author={Filipczak, M.},
   author={G\l \c ab, S.},
   title={Strong $\germ{c}$-algebrability of strong Sierpi\'nski-Zygmund, smooth nowhere analytic and other sets of functions},
   journal={J. Math. Anal. Appl.},
   volume={412},
   date={2014},
   number={2},
   pages={620--630}, %doi={10.1016/j.jmaa.2013.10.075},
}

\bib{bartoszewiczglab2013}{article}{
  author={Bartoszewicz, A.},
  author={G\l \c ab, S.},
  title={Strong algebrability of sets of sequences of functions},
  journal={Proc. Amer. Math. Soc.},
  volume={141},
  date={2013},
  pages={827--835},
}

\bib{bastinconejeroesserseoane2014}{article}{
 author={Bastin, F.},
 author={Conejero, J.A.},
 author={Esser, C.},
 author={Seoane-Sep{\'u}lveda, J.B.},
 title={Algebrability and nowhere Gevrey differentiability},
 journal={Israel J. Math.},
 volume={205},
 date={2015},
 number={1},
 pages={127--143},
% doi={10.1007/s11856-014-1104-1},
}

\bib{bastinessernikolay2012}{article}{
  author={Bastin, F.},
  author={Esser, C.},
  author={Nikolay, S.},
  title={Prevalence of ``nowhere analyticity''},
  journal={Studia Math.},
  volume={210},
  date={2012},
  number={3},
  pages={239--246},
}

\bib{bernal1987}{article}{
  author={Bernal-Gonz{\'a}lez, L.},
  title={Funciones con derivadas sucesivas grandes y peque\~nas por doquier},
  journal={Collect.~Math.},
  volume={38},
  date={1987},
  pages={117--122},
}

\bib{bernal2008}{article}{
  author={Bernal-Gonz{\'a}lez, L.},
  title={Lineability of sets of nowhere analytic functions},
  journal={J. Math. Anal. Appl.},
  volume={340},
  date={2008},
  number={2},
  pages={1284--1295},
}

\bib{bernal2010}{article}{
  author={Bernal-Gonz{\'a}lez, L.},
  title={Algebraic genericity of strict-order integrability},
  journal={Studia Math.},
  volume={199},
  date={2010},
  number={3},
  pages={279--293},
}

\bib{bernal2014}{article}{
  author={Bernal-Gonz{\'a}lez, L.},
  title={Vector spaces of non-extendable holomorphic functions},
  journal={J. Analyse Math.},
  date={2015},
  status={accepted for publication},
}

\bib{bernalordonez2014}{article}{
   author={Bernal-Gonz\'{a}lez, L.},
   author={Ord\'{o}\~{n}ez Cabrera, M.},
   title={Lineability criteria, with applications},
   journal={J. Funct. Anal.},
   volume={266},
   date={2014},
   number={6},
   pages={3997--4025},
   %doi={10.1016/j.jfa.2013.11.014},
}

\bib{bernalpellegrinoseoane2014}{article}{
   author={Bernal-Gonz{\'a}lez, L.},
   author={Pellegrino, D.},
   author={Seoane-Sep{\'u}lveda, J.B.},
   title={Linear subsets of nonlinear sets in topological vector spaces},
   journal={Bull. Amer. Math. Soc. (N.S.)},
   volume={51},
   date={2014},
   number={1},
   pages={71--130}, %doi={10.1090/S0273-0979-2013-01421-6},
}

\bib{cartan1997}{book}{
  author={Cartan, H.},
  title={Th\'eorie \'el\'ementaire des fonctions analytiques d'une ou plusieurs variables complexes},
  edition={6},
  publisher={Hermann},
  place={Paris},
  date={1997},
}

\bib{cater1984}{article}{
  author={Cater, F.S.},
  title={Differentiable, nowhere analytic functions},
  journal={Amer. Math. Monthly},
  volume={91},
  date={1984},
  number={10},
  pages={618--624},
}

\bib{conejerojimemezmunozseoane2014}{article}{
   author={Conejero, J.A.},
   author={Jim\'{e}nez-Rodr{\'{\i}}guez, P.},
   author={Mu\~{n}oz-Fern\'{a}ndez, G.A.},
   author={Seoane-Sep\'{u}lveda, J.B.},
   title={When the identity theorem ``seems'' to fail},
   journal={Amer. Math. Monthly},
   volume={121},
   date={2014},
   number={1},
   pages={60--68},
   %doi={10.4169/amer.math.monthly.121.01.060},
}

\bib{conejeromunozmurilloseoane2015}{article}{
  author={Conejero, J.A.},
   author={Mu{\~n}oz-Fern{\'a}ndez, G.A.},
   author={Murillo Arcila, M.},
   author={Seoane-Sep{\'u}lveda, J.B.},
   title={Smooth functions with uncountably many zeros},
   journal={Bull. Belg. Math. Soc. Simon Stevin},
   volume={22},
   date={2015},
   number={1},
   pages={71--75},
}

\bib{darst1973}{article}{
   author={Darst, R.B.},
   title={Most infinitely differentiable functions are nowhere analytic},
   journal={Canad. Math. Bull.},
   volume={16},
   date={1973},
   pages={597--598},
}

\bib{dominguez1986}{article}{
   author={Dom{\'{\i}}nguez Benavides, Tom\'{a}s},
   title={How many zeros does a continuous function have?},
   journal={Amer. Math. Monthly},
   volume={93},
   date={1986},
   number={6},
   pages={464--466},
   %doi={10.2307/2323473},
}

\bib{enflogurariyseoane2014}{article}{
   author={Enflo, P.H.},
   author={Gurariy, V.I.},
   author={Seoane-Sep\'{u}lveda, J.B.},
   title={Some results and open questions on spaceability in function spaces},
   journal={Trans. Amer. Math. Soc.},
   volume={366},
   date={2014},
   number={2},
   pages={611--625},
   %doi={10.1090/S0002-9947-2013-05747-9},
}

\bib{gurariyquarta2004}{article}{
  author={Gurariy, V.I.},
  author={Quarta, L.},
  title={On lineability of sets of continuous functions},
  journal={J. Math. Anal. Appl.},
  volume={294},
  date={2004},
  number={1},
  pages={62--72},
}

\bib{kimkwon2000}{article}{
  author={Kim, S.S.}, %Sung
  author={Kwon, K.H.},  %Kil
  title={Smooth $(C^\infty )$ but nowhere analytic functions},
  journal={Amer. Math. Monthly},
  volume={107},
  date={2000},
  number={3},
  pages={264--266},
}

\bib{lerch1888}{article}{
  author={Lerch, M.},
  title={Ueber die Nichtdifferentiirbarkeit bewisser Funktionen},
  journal={J. Reine angew. Math.},
  volume={103},
  date={1888},
  pages={126--138},
}

\bib{morgenstern1954}{article}{
   author={Morgenstern, D.},
   title={Unendlich oft differenzierbare nicht-analytische Funktionen},
   language={German},
   journal={Math. Nachr.},
   volume={12},
   date={1954},
   pages={74},
}

\bib{ramsamujh1991}{article}{
  author={Ramsamujh, T.I.},
  title={Nowhere analytic $C^\infty$ functions},
  journal={J. Math. Anal. Appl.},
  volume={160},
  date={1991},
  pages={263--266},
}

\bib{rudin1987}{book}{
  author={Rudin, W.},  %Walter
  title={Real and Complex Analysis},
  edition={3},
  publisher={McGraw-Hill Book Co.},
  place={New York},
  date={1987},
  %pages={xiv+416},
}

\bib{rudin1991}{book}{                %PARA TEOREMA DE STONE-WEIERSTRASS
  author={Rudin, W.},  %Walter
  title={Functional Analysis},
  edition={2},
  publisher={McGraw-Hill Book Co.},
  place={New York},
  date={1991},
}

\bib{salzmannzeller1955}{article}{
  author={Salzmann, H.},
  author={Zeller, K.},
  title={Singularit\"aten unendlicht oft differenzierbarer Funktionen},
  language={German},
  journal={Math. Z.},
  volume={62},
  date={1955},
  pages={354--367},
}

\bib{seoane2006}{book}{
  author={Seoane-Sep\'{u}lveda, J.B.},
  title={Chaos and lineability of pathological phenomena in analysis},
  note={Thesis (Ph.D.)--Kent State University},
  publisher={ProQuest LLC, Ann Arbor, MI},
  date={2006},
  pages={139},
  isbn={978-0542-78798-0},
}

\bib{zahorski1947}{article}{
  author={Zahorsky, Z.},
  title={Sur l'ensemble des points singuliers d'une fonction d'une variable r\'eelle admettant les d\'eriv\'ees de tous les ordres},
  journal={Fund. Math.},
  volume={34},
  year={1947},
  pages={183--247; Fund. Math. {\bf 36} (Suppl.) (1949) 319--320},
}

\end{biblist}
\end{bibdiv}

\end{document}